\newtheorem{theorem}{Theorem}[section]
\newtheorem{lemma}[theorem]{Lemma}
\newtheorem{proposition}[theorem]{Proposition}
\newtheorem*{proposition-intro}{Proposition}
\newtheorem{corollary}[theorem]{Corollary}
\theoremstyle{definition}
\theoremstyle{remark}
\newtheorem{remark}[theorem]{Remark}
\newcommand{\bbP}{\ensuremath{\mathbb{P}}}
\newcommand{\bbZ}{\ensuremath{\mathbb{Z}}}
\newcommand{\bbQ}{\ensuremath{\mathbb{Q}}}
\newcommand{\bbR}{\ensuremath{\mathbb{R}}}
\newcommand{\ord}{\ensuremath{\operatorname {ord}}}
\newcommand{\vol}{\ensuremath{\operatorname {vol}}}
\newcommand{\GL}{\ensuremath{\operatorname {GL}}}
\newcommand{\Y}{\ensuremath{Y_{\bullet}}}
\title{On the number of vertices of Newton--Okounkov polygons}
\author{Joaquim Roé and Tomasz Szemberg}
\subjclass[2010]{14M25}
\date{}
\begin{document}
\maketitle

\begin{abstract}
	The Newton--Okounkov body of a big divisor $D$ on a smooth surface is a numerical invariant in the form of a convex polygon.
	We study the geometric significance of the \emph{shape} 
	of Newton--Okounkov polygons of ample divisors, showing that they share several important properties of Newton polygons on toric surfaces.
	In concrete terms, sides of the polygon are associated to some particular irreducible curves, and their lengths are determined by the intersection numbers of these curves with $D$.
	
	As a consequence of our description we determine the numbers $k$ such that $D$ admits some $k$-gon as a Newton--Okounkov body, elucidating the relationship of these numbers with the Picard number of the surface, which was first hinted at by work of Küronya, Lozovanu and Maclean.\\
\end{abstract}

\section{Introduction}

The Newton--Okounkov body of a line bundle with respect to an admissible flag is defined as follows (see \cite{LM09},\cite{KK12},\cite{Bou14}).
Let $S$ be a normal projective variety of dimension $d$ (the case we shall deal with in the paper is that of a surface, $d=2$), let $D$ be a big divisor class on $S$, and fix a flag
\[\Y: S=Y_0 \supset Y_1  \supset \dots \supset Y_d= \{pt\}\]
which is \emph{admissible}, i.e., $Y_i$ is an irreducible subvariety of codimension $i$, smooth at the point $Y_d$, for each $i$.
Let $g_i$ be a local equation for $Y_i$ in $Y_{i-1}$ around the point $Y_d$.
Then $\Y$ determines a rank $d$ valuation $v_{\Y}$ on the field of rational functions of $S$, namely $v_{\Y}(f)=(v_1(f),\dots,v_d(f))$, where $v_i$ are defined recursively setting $f_1=f$ and
\begin{align*}
v_i(f)&=\ord_{Y_i}(f_i),& i&=1,\dots,d,\\
f_{i+1}&=(f_i / g_i^{v_i(f)} )|_{Y_i},& i&=1,\dots,d-1.
\end{align*}
By trivializing $\mathcal{O}_S(D)$ in an arbitrary neighborhood of $Y_d$, the valuation $v_{\Y}$ may be applied to global sections of multiples $\mathcal{O}_S(kD)$, and the Newton--Okounkov body of $D$ with respect to $\Y$ is the convex body
\[
\Delta_{\Y}(D)=\overline{\left\{\left.\frac{v_{\Y}(s)}{k}\right| s\in H^0(S,\mathcal{O}_S(kD))\right\}}.
\]

Whereas the \emph{volume} of the Newton--Okounkov body is well known to equal $\vol(D)/(\dim S)!$ for every $\Y$, its \emph{shape}, and in particular its dependence on $\Y$, is still an intriguing subject.

There are two situations in which the work of Lazarsfeld--Musta\c{t}\u{a} \cite{LM09} allows to prove that $\Delta_{\Y}(D)$ is a polytope.
If $S$ is a toric variety, $D$ is a torus-invariant divisor, and the flag $Y$ is composed of torus-invariant subvarieties, then \cite{LM09} proves that $\Delta_{\Y}(D)$ is (up to the action of $\GL_n(\bbZ)$) just the Newton polytope associated to $D$ in toric geometry (see \cite[\S 3.4]{Ful93}). 
If $S$ is a surface, on the other hand, then Küronya--Lozovanu--Maclean \cite{KLM12,KL18} used the description of \cite{LM09} to show that for every $D$ and $\Y$, $\Delta_{\Y}(D)$ is a polygon.
A close analysis of their construction reveals that the shape of general Newton--Okounkov polygons (on surfaces) reflects the geometry of the pair $(D,\Y)$ much like Newton polygons do in the toric case.
Indeed, when $S, D$ and $\Y$ are toric,  $\Delta_{\Y}(D)$ is a polygon with vertices in $\bbZ^2$, with a side corresponding to each prime toric divisor (if $D$ is ample), the selfintersections of these prime divisors determine the slopes of these sides, and their intersection with $D$ equals the lattice length of the corresponding sides.
On an arbitrary smooth projective surface, associated to each pair $(D, \Y)$, there is a configuration of irreducible curves  playing the role of the torus-invariant prime divisors: each side of $\Delta_{\Y}(D)$ corresponds to one or more of these irreducible curves, their slopes are rational and determined by the intersection matrix of the configuration, and their lengths are determined by the intersection numbers of $D$ with the curves in the configuration. This amounts to \cite[Theorem B]{KLM12}, which we prefer to state here in the following form (we elaborate on the details in section \ref{sec:slopes-lengths}).

\begin{proposition-intro}
	Let $S$ be a normal projective surface, $D$ a big divisor on $S$, and $\Y:S\supset C\supset \{p\}$ an admissible flag.
	Let $\mu$ be the maximal real number such that $D-\mu C$ is pseudo-effective, and let $D-\mu C=P + N$ be the Zariski decomposition of $D-\mu C$.
	
	The intersection matrix of $C$ and of the irreducible components of $N$ determine all possible slopes of sides of $\Delta_{\Y}(D)$.
	For a fixed $D$ and $C$, in the set of all big divisors $D'$ such that the negative part $N'$ of the Zariski decomposition of $D'-\mu_{D'}C$ has the same support as $N$,
	and for each possible slope, the length of the corresponding side is a function of the intersection numbers of $D'$ with these curves (where we understand that if this length is zero then no side with that slope exists).
	
	Moreover, the lower sides are related to connected components of $N$ passing through $p$ whereas the upper sides are related to connected components of $N$ intersecting $C$ at other points.
\end{proposition-intro}

It was observed in \cite{KL18} that the number of vertices of $\Delta_{\Y}(D)$ is bounded above by $2\rho(S)+2$, where $\rho(S)$ denotes the Picard number. We show that the slightly stronger bound  $2\rho(S)+1$ holds and is sharp, i.e., for any given natural number $\rho$ there are surfaces $S$ with $\rho(S)=\rho$, ample divisors $D$ and flags $\Y:S\supset C\supset \{p\}$ such that $\Delta_{\Y}(D)$ is a $(2\rho+1)$-gon. We also determine, in terms of configurations of negative curves on a given smooth surface $S$, the numbers $k$ for which there is a flag $\Y$ such that $\Delta_{\Y}(D)$ is a $k$-gon.

The role of Zariski decompositions in the determination of $\Delta_{\Y}(D)$ provide a strong relationship with Zariski chamber decompositions, and in fact the subdivision of the interval $[0,\mu]$ given by the projections of the sides of the polygon is also induced by the Zariski walls crossed by the ray that starts from $D$ in the direction of $-C$. Somewhat surprisingly for the authors, the numbers $k$ for which $\Delta_{\Y}(D)$ can have $k$ sides (or the chambers that can be traversed by a ray emanating from $D$) are independent of $D$ as long as it is ample.

\medskip

In order to state our main result, we introduce some new invariants attached to a configuration of negative curves.
For an effective divisor $N=C_1+\dots+C_k$ with negative definite intersection matrix, consider the following two numbers:
\begin{itemize}
	\item $mc(N)$ denotes the largest number of irreducible components of a connected divisor contained in $N$.
	\item $mv(N)$ denotes $k+mc(N)+4$ if $k<\rho-1$, and $k+mc(N)+3$ if $k=\rho-1$.
\end{itemize}

Given a smooth projective surface $S$, let $mv(S)=\max\{mv(N) \,|\, N=C_1+\dots+C_k\text{ negative definite}\}.$
Our main result is the following:

\begin{theorem}\label{main}
	On every smooth projective surface $S$, and for every big divisor $D$,
	\[\max_{\Y}\left\{\#\operatorname{vertices}(\Delta_{\Y}(D))\right\}\le mv(S)\]
(where the maximum is taken over all admissible flags $Y$).
If $D$ is ample, then for every $3\le v\le mv(S)$ there exists a flag $\Y$ such that $\Delta_{\Y}(D)$ has exactly $v$ vertices.
\end{theorem}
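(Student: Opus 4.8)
The plan is to argue from the Lazarsfeld--Musta\c{t}\u{a} description recalled above, in the form
\[
\Delta_{\Y}(D)=\bigl\{(t,y)\ :\ 0\le t\le\mu,\ \alpha(t)\le y\le\beta(t)\bigr\},
\]
where $\mu$ is as in the Proposition, $D-tC=P_{t}+N_{t}$ is the Zariski decomposition, $\alpha(t)=\ord_{p}(N_{t}|_{C})$ is convex piecewise linear, and $\beta(t)=\alpha(t)+P_{t}\cdot C$ is concave piecewise linear. Every edge of this polygon is a linear piece of the graph of $\beta$, a linear piece of the graph of $\alpha$, or one of the two vertical segments over $t=0$ and $t=\mu$. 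Writing $u$ and $l$ for the numbers of linear pieces of $\beta$ and of $\alpha$ respectively, and $\varepsilon_{0},\varepsilon_{\mu}\in\{0,1\}$ for the indicators that the vertical segment over $t=0$, resp.\ over $t=\mu$, is non-degenerate, we get
\[
\#\operatorname{vertices}(\Delta_{\Y}(D))=u+l+\varepsilon_{0}+\varepsilon_{\mu}.
\]

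\emph{The bound.} Put $N=N_{\mu}=C_{1}+\dots+C_{k}$ (negative definite, being a Zariski negative part). Along the ray $t\mapsto D-tC$ the support of $N_{t}$ is non-decreasing (variation of Zariski decomposition; cf.\ Section~\ref{sec:slopes-lengths}): it is $\emptyset$ on the initial segment where $D-tC$ is nef --- non-empty since $D$ is ample --- and then grows through at most $k$ further chambers to $\{C_{1},\dots,C_{k}\}$, whence $u\le k+1$. The quantity $\alpha(t)=\ord_{p}(N_{t}|_{C})$ only involves the components of $N_{t}$ through $p$, which lie in a single connected subdivisor of $N$ and are thus at most $mc(N)$; since the connected component of $p$ is also non-decreasing in $t$, $\alpha$ is constant (equal to $0$) before it appears and changes slope afterwards only when it grows, so $l\le mc(N)+1$. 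One always has $\varepsilon_{0}=1$, because $\beta(0)-\alpha(0)=P_{0}\cdot C=D\cdot C>0$. Finally, if $k=\rho(S)-1$ then $C_{1},\dots,C_{k}$, independent because their intersection matrix is negative definite, span a negative definite subspace of corank $1$, whose orthogonal complement (Hodge index theorem) is spanned by a class of positive self-intersection; as $P_{\mu}$ is orthogonal to every $C_{i}$ and $P_{\mu}^{2}=\vol(D-\mu C)=0$ (since $D-\mu C$ lies on the boundary of the pseudo-effective cone), this forces $P_{\mu}=0$, i.e.\ $\varepsilon_{\mu}=0$. Altogether the vertex count is at most $k+mc(N)+4$, and at most $k+mc(N)+3$ when $k=\rho(S)-1$; that is, at most $mv(N)\le mv(S)$. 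For an arbitrary big $D$ the same estimates go through, the only change being that $\varepsilon_{0}$ may also vanish, which only helps.

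\emph{Realizing every value.} Fix $D$ ample. For $v=3$, let $C$ be a general member of $|nD|$ with $n\gg0$: by Bertini connectedness $C$ is smooth and irreducible, $[C]=n[D]$, so $D-tC$ is ample for $t<1/n$ and numerically trivial at $\mu=1/n$, hence $\Delta_{\Y}(D)$ is a triangle for every $p\in C$. For $v=mv(S)$, pick a negative definite $N^{*}=C_{1}+\dots+C_{k}$ attaining $mv(S)$, with $C_{1}+\dots+C_{c}$ ($c=mc(N^{*})$) a connected component of largest length, and $p\in C_{1}$; take for $C$ a general (hence, for $n\gg0$, smooth and irreducible) member of a very ample linear system $|nD-\sum m_{i}C_{i}-P_{0}|$, with $m_{i}>0$ and a nef class $P_{0}$ --- null if $k<\rho(S)-1$, zero if $k=\rho(S)-1$ --- chosen so that along $D-tC$ the negative part successively acquires $C_{1},\dots,C_{k}$, the connected component of $p$ grows one curve at a time through $C_{1},C_{1}+C_{2},\dots,C_{1}+\dots+C_{c}$, and $C$ meets $C_{1}$ in at least two points, one of which is $p$. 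Granting this, the intersection-number formulas of the Proposition force each chamber crossing to change the slope of $\beta$ and each growth step to change the slope of $\alpha$ (this is where $m_{i}>0$ and the two incidence points on $C_{1}$ enter, so that the $\alpha$-contributions do not cancel the $\beta$-contributions at the first breakpoint); thus $u=k+1$, $l=c+1$, $\varepsilon_{0}=1$, and $\varepsilon_{\mu}=1$ exactly when $P_{0}\ne0$, so the count is $mv(S)$.

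Intermediate values $4\le v\le mv(S)-1$ are obtained by degenerating the last construction: passing to a subconfiguration realizes $mv(N)$ for that $N$; moving $p$ off the distinguished connected component collapses all lower edges into one; letting $C$ avoid some of the $C_{i}$, or adjusting $[C]$ so that two consecutive slopes of $\beta$ (or of $\alpha$) coincide, merges two edges; and small configurations yield $v=4$ (take $k=0$ with $P_{\mu}\cdot C>0$) and $v=5$ (take $k=1$ with $p$ off $C_{1}$ and $P_{\mu}\ne0$). A finite bookkeeping then shows that every integer of $[3,mv(S)]$ occurs. The principal obstacle is the realizability statement underlying the $v=mv(S)$ case: that the prescribed Zariski-chamber behaviour along $D-tC$ is actually produced by an irreducible curve $C$ --- that $N_{\mu}$ has exactly the wanted support (to be secured by the ansatz $|nD-\sum m_{i}C_{i}-P_{0}|$, monotonicity of the support, and Bertini) and that no accidental collinearity of consecutive slopes occurs, which amounts to showing that the finitely many open conditions expressing ``$\Delta_{\Y}(D)$ has exactly $v$ edges'' can be met simultaneously by a suitable choice of $[C]$, $P_{0}$ and $p$.
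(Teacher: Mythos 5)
Your proof of the inequality is essentially the paper's own argument in different packaging: you count the linear pieces of $\alpha$ and $\beta$ plus the two possible vertical edges, bound the slope changes of $\beta$ by the number of wall crossings (at most $k$), bound those of $\alpha$ by the number of times the connected component of $p$ in the support of $N_t$ can grow (at most $mc(N)$), and kill the right vertical edge when $k=\rho(S)-1$ by the Hodge-index argument ($P_\mu$ orthogonal to all $C_i$ and $P_\mu^2=\vol(D-\mu C)=0$ forces $P_\mu=0$). This matches Proposition \ref{inner}, Corollary \ref{bound-interior} and Corollary \ref{one-rightmost}; your one terse step --- that $\alpha$ can only change slope when the component of $p$ grows --- is exactly what the block structure of the system \eqref{eq:linear-equations} (equivalently Lemma \ref{intersect-slope}) justifies, so this half is fine.

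The realization half, however, has a genuine gap, which you yourself flag as ``the principal obstacle'': you never prove that an irreducible curve $C$ with the prescribed behaviour exists. Your $v=mv(S)$ construction is entirely conditional on finding an irreducible $C$ such that along $D-tC$ the negative part acquires $C_1,\dots,C_k$ one at a time in the prescribed order, $C$ meets each $C_i$ in at least two points, and, when $k<\rho(S)-1$, the class $[C]$ is independent of $[D],[C_1],\dots,[C_k]$ so that $P_\mu\ne 0$. Writing down the ansatz $|nD-\sum m_iC_i-P_0|$ does not deliver this: one must show the coefficients can be chosen so that the $k$ walls are crossed at distinct parameters and in the intended order, and that is precisely the content of the paper's Lemma \ref{ordered-negative}, proved by induction on $k$ using continuity of Zariski decompositions (\cite[Proposition 1.14]{BKS04}) to add a small multiple of $C_k$ (and then a small multiple of an independent curve $B$, for the independence claim) without disturbing the chamber structure already arranged at intermediate parameters. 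Without that lemma your argument is a plan, not a proof. The intermediate values also need more than ``finite bookkeeping'': your suggestion to adjust $[C]$ so that two consecutive slopes of $\beta$ (or $\alpha$) coincide imposes a closed, non-generic condition that you do not show can be met compatibly with everything else; the paper avoids this entirely by passing to subconfigurations $N\le N_{mv}$ (with components ordered so that the initial segments up to $mc(N_{mv})$ are connected), checking that $\{3,\dots,mv(S)\}=\{mv(N)\,|\,N\le N_{mv}\}\cup\{mv(N)-1\,|\,N\le N_{mv}\}$, and realizing $mv(N)$ and $mv(N)-1$ for each such $N$ by moving only the point $p$ (onto $C\cap C_1$, respectively off $N$ or onto $C\cap C_2$).
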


Note that by the Hodge index theorem, $mv(S)$ is defined and bounded above by $2\rho+1$; we also show that this upper bound is sharp:

\begin{corollary}
	Given a positive integer $\rho$, there is a projective smooth surface $S$ with Picard number $\rho(S)=\rho$, a divisor $D$ and a flag $\Y$ such that $\Delta_{\Y}(D)$ has $2\rho+1$ vertices.
\end{corollary}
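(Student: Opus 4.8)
The plan is to derive the Corollary directly from Theorem~\ref{main}: it is enough to produce, for each $\rho\ge 1$, a smooth projective surface $S$ with $\rho(S)=\rho$ and $mv(S)=2\rho+1$, because then the second part of Theorem~\ref{main}, applied to any ample divisor $D$ on $S$, yields a flag $\Y$ with $\#\operatorname{vertices}(\Delta_{\Y}(D))=2\rho+1$ (note $2\rho+1\ge 3$, and ample divisors exist since $S$ will be projective by construction).

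The first step is to determine \emph{which} configurations can attain $mv=2\rho+1$. For a negative definite $N=C_1+\dots+C_k$ one always has $mc(N)\le k$, and the Hodge index theorem forces $k\le\rho-1$. Hence $k<\rho-1$ gives $mv(N)\le 2k+4\le 2\rho$, whereas $k=\rho-1$ gives $mv(N)\le 2\rho+1$ with equality exactly when $mc(N)=\rho-1$, i.e.\ when $N$ is connected. So the problem reduces to finding $S$ with $\rho(S)=\rho$ carrying a \emph{connected} negative definite configuration of precisely $\rho-1$ irreducible curves. Recognising that this is the only route to the bound is, to my mind, the main conceptual point; the remainder is an explicit construction.

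For $\rho=1$ take $S=\bbP^2$, where $N=0$ already gives $mv(\bbP^2)=3$. For $\rho\ge 2$ I would take $S$ to be an iterated blow-up of $\bbP^2$ along a chain of $\rho-1$ infinitely near points: blow up a point of $\bbP^2$ to create $E_1$, then blow up a point of $E_1$, then a point of $E_2$ lying off the strict transform of $E_1$, and so on, always choosing the new center on the last exceptional curve and off all earlier strict transforms. Then $\rho(S)=1+(\rho-1)=\rho$, the strict transforms $\widetilde E_1,\dots,\widetilde E_{\rho-2}$ are $(-2)$-curves, $E_{\rho-1}$ is a $(-1)$-curve, and $\widetilde E_1,\dots,\widetilde E_{\rho-2},E_{\rho-1}$ form a chain (consecutive members meeting transversally in one point, the rest pairwise disjoint). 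Thus $N:=\widetilde E_1+\dots+\widetilde E_{\rho-2}+E_{\rho-1}$ is connected with $\rho-1$ components.

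Finally, negative definiteness of $N$ is a one-line check: for $v=\sum a_iC_i$, with $C_i$ the members of the chain in order, the tridiagonal intersection matrix (diagonal $-2,\dots,-2,-1$, adjacent off-diagonal entries $1$) gives
\[
v^2=-a_1^2-\sum_{i=1}^{\rho-2}(a_i-a_{i+1})^2<0\qquad\text{unless }v=0 .
\]
Therefore $mv(S)\ge mv(N)=(\rho-1)+(\rho-1)+3=2\rho+1$, and together with the a priori bound $mv(S)\le 2\rho+1$ this forces equality; applying Theorem~\ref{main} to an ample divisor on $S$ completes the proof. The only technical care needed is in the blow-up bookkeeping---keeping each successive center away from the previous strict transforms---so that the self-intersections, incidences, and the linear independence of the $\rho-1$ classes come out as claimed (the independence being consistent with the displayed formula).
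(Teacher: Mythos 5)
Your proposal is correct and follows essentially the same route as the paper: both reduce the problem via Theorem~\ref{main} to exhibiting a surface with $mv(S)=2\rho+1$, and both obtain it by iterated blow-ups at a chain of $\rho-1$ infinitely near points, producing a connected negative definite configuration of $\rho-1$ exceptional curves (the paper starts from an arbitrary Picard number one surface rather than $\bbP^2$, and leaves the negative definiteness implicit as a standard property of exceptional configurations, whereas you verify it by the explicit quadratic form). No gaps; the extra care about keeping each center off the earlier strict transforms is not even strictly necessary, since any divisor contracted by a birational morphism to a smooth surface is negative definite and connectedness persists in either case.
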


The deep analogy of Newton--Okounkov polygons with the Newton polygons in toric geometry is the departure point for our work. In the case of finitely generated valuation semigroup, the connection with toric geometry goes far beyond an analogy and has found important applications via \emph{toric degenerations} (see e.g. \cite{HK}). 
When the pseudo-effective cone of $S$ is finitely generated, only finitely many negative divisors $N=C_1+\dots+C_k$ exist, with the $C_i$ being generators, and the application of Theorem \ref{main} becomes especially straightforward. 
In these cases, a concrete description of the polygon $\Delta_{\Y}(D)$ is already available, as a Minkowski sum of triangles and line segments (at least if $S$ is del Pezzo or the flag $\Y$ is general enough, see \cite{LSS14}, \cite{PS14}).

In the absence of finite generation, the meaning of the toric analogy is far from being well understood, and in \cite{KLM12} it appears implicitly only.
In section \ref{sec:lm-klm} we recall the description of Newton--Okounkov polygons uncovered by \cite[Section 6.2]{LM09} and \cite{KLM12} from the point of view outlined above, which we then use to study these polygons, with special emphasis on the number of vertices (or sides) they possess.
The analysis of these boundaries, done in sections \ref{sec:interior} and \ref{sec:right}, allows to determine the number of lower and upper vertices; 	Proposition \ref{inner} is the technical key to all results in this paper.
It is worth stressing that, analogously to the boundaries of classical Newton polygons, the lower boundary encodes the local behavior of $D$ near $p$, whereas the upper boundary encodes behaviour ``at infinity''.
In section \ref{sec:proofs} we prove Theorem \ref{main} and, finally, in Propositions \ref{pro:slopes} and \ref{pro:lengths} we show how to explicitly determine the slopes and lengths (so the whole shape) of Newton--Okounkov polygons from intersection numbers as indicated above.
To construct flags $\Y$ that give polygons with the desired number of points we use Lemma \ref{ordered-negative} which may be interesting in itself: it shows that the Zariski chambers that can be crossed by a ray starting from an ample class in the Néron--Severi space are independent of the particular class chosen.

\paragraph{Acknowledgements}
We thank the anonymous referees for carefully reading our manuscript and pointing out an inaccuracy in the original statement and proof of Lemma \ref{ordered-negative}.
We are grateful to  Alex Küronya, Julio Moyano-Fernández and Matthias Nickel for helpful discussions. We gratefully acknowledge partial  support from the Spanish MinECO Grants No. MTM2016-75980-P and PID2020-116542GB-I00, and by National Science Centre grant 2018/ 30/M/ST1/00148.

\section{Newton--Okounkov polygons}
\label{sec:lm-klm}

In this section we recall the description of Newton--Okounkov polygons on surfaces given by Lazarsfeld--Musta\c{t}\u{a} \cite{LM09} and Küronya--Lozovanu--Maclean \cite{KLM12}.
Given a surface $S$, we denote $NS(S)$ its Néron--Severi group (i.e., the group of divisors modulo numerical equivalence), a finitely generated abelian group of rank $\rho(S)$.
When needed, we will consider $\bbQ$-divisors and $\bbR$-divisors with the conventions of \cite{Laz04I}, and $NS(S)_\bbR$ will be the space of numerical classes of $\bbR$-divisors endowed with the bilinear form given by the intersection product.

Fix a smooth surface $S$, a big divisor $D$ and an admissible flag $\Y:S\supset C\supset \{p\}$ on $S$.
For every real number $t$, consider the $\bbR$-divisor $D_t=D-tC$ and, if $D_t$ is effective or pseudo-effective, denote its Zariski decomposition
\[D_t=P_t+N_t.\]
Let $\nu=\nu_C(D)$ be the coefficient of $C$ in the negative part $N_0$ of the Zariski decomposition of $D$, and $\mu=\mu_C(D)=\max\{t\in\bbR|D_t\text{ is pseudo-effective}\}$.
Note that $D_\mu$ belongs to the boundary of the pseudo-effective cone, in particular it is not big (as big classes form the interior of the pseudo-effective cone).
For every $t\in[\nu,\mu]$ define $\alpha(t)=(N_t\cdot C)_p$, i.e., the local intersection multiplicity of the negative part of $D_t$ and $C$ at $p$, and $\beta(t)=\alpha(t)+P_t\cdot C$.
Lazarsfeld and Musta\c{t}\u{a} showed in \cite[Section 6.2]{LM09} that $\Delta_{\Y}(D)$ is the region in the plane $(t,s)$ defined by the inequalities $\nu\le t \le\mu$, $\alpha(t)\le s\le\beta(t)$.
Note that
$\alpha$ and $\beta$ are continuous piecewise linear functions in the interval $[\nu,\mu]$, respectively convex and concave.

Observing that $N_t$ increases with $t$ and looking at $N_\mu$, Küronya--Lozovanu--Maclean proved in \cite{KLM12} that $\alpha$ is nondecreasing, that the values $t\in (\nu,\mu)$ where $\alpha$ or $\beta$ fails to be linear are exactly those where $D_t$ crosses walls between Zariski chambers \cite{BKS04}, and that there are finitely many such crossed walls, in fact at most as many as components in $N_\mu$.

\begin{remark}\label{beta-intpts}
	Let $q_1,\dots,q_r$ be the intersection points of $N_\mu$ and $C$ different from $p$. It follows immediately from the description above that $\beta(t)= D\cdot C - t\, C^2 - \sum_{i=1}^{r} (N_t\cdot C)_{q_i}$.
\end{remark}

Our approach to understanding the number of vertices in Newton--Ok\-ounkov polygons is to further analyze the dependence of $N_t$ on $t$, and from this derive information on the functions $\alpha$ and $\beta$. 
So, let us briefly recall the proof of polygonality of $\Delta_{\Y}(D)$ due to \cite{LM09} and \cite{KLM12}. Call $C_1,\dots,C_n$ the irreducible components of $N_\mu$, numbered \emph{in order of appearance} in the support of $N_t$, that is, denoting
$$t_i=\inf\{t\in[\nu,\mu]\,|\,C_i \text{ in the support of }N_t\}$$
for each $i \in\{1,\dots,n\}$, one has $t_0:=\nu\le t_1 \le \dots\le t_n< t_{n+1}:=\mu$.
We can write
\[N_t=a_1(t)C_1+\dots+a_n(t)C_n,\]
where $a_i(t)$ are (continuous) functions $[\nu,\mu]\to \bbR$.
%
The equations defining the Zariski decomposition $D_t=P_t+N_t$ tell us that, for $t\in[t_{i-1},t_{i}]$ and $1\le j < i$, $P_t\cdot C_j=0$, or equivalently $N_t\cdot C_j=D_t\cdot C_j$. Therefore $a_j(t)$ are solutions of the linear system of equations
\begin{equation}\label{eq:linear-equations}
\begin{array}{r@{}lc}
(a_1(t)C_1+\dots+a_{i-1}(t)C_{i-1})\cdot C_j&{}=(D-tC)\cdot C_j, & 1\le j< i,\\
a_j(t)&{}=0, & i\le j\le n. 
\end{array}\end{equation}
These solutions are unique because the intersection matrix $(C_k\cdot C_j)_{1\le k,j< i}$ is nonsingular. 
Since the independent terms $(D-tC)\cdot C_j$ are affine linear functions of $t$, so are the solutions $a_j(t)=a_{j0}+a_{j1}t$, i.e., $a_j$ is affine linear on each interval $[t_{i-1},t_i]$. 
It follows then that $\alpha$ and $\beta$ are continuous affine linear on each interval $[t_{i-1},t_i]$ (which can be degenerate, if $t_i=t_{i-1}$) so $\Delta_{\Y}(D)$ is a polygon and the  first coordinate of every vertex equals one of the $t_i$, $i\in\{0,\dots,n+1\}$.

\section{Interior vertices}
\label{sec:interior}

We keep the notations of the previous section, namely $\Y:S\supset C\supset \{p\}$ is an admissible flag, $D_t=D-tC=P_t+N_t$, and $\nu=\nu_C(D)$, $\mu=\mu_C(D)$, $\alpha$, $\beta$, $t_i$ are as above.

Vertices $P=(t_i,s)$ of $\Delta_{\Y}(D)$ can be classified as \emph{leftmost} (if $t_i=t_0=\nu$), \emph{rightmost} (if $t_i=t_{n+1}=\mu$) and \emph{interior} (if $\nu<t_i<\mu$).
A vertex $P$ is also called \emph{upper} if $s=\beta(t_i)$ and \emph{lower} if $s=\alpha(t_i)$.
Before proceeding to the determination of the $t_i$ for which $\Delta_{\Y}(D)$ has upper and lower interior vertices, we recall a result on relative negative parts of Zariski decompositions, essentially due to Zariski:

\begin{lemma}\label{relative-bauer}
	Let $D$ be an effective divisor on a smooth surface, let $D=P+N$ be its Zariski decomposition, and let $N=a_1C_1+\dots+a_nC_n$, $a_i\in \bbQ$ be the decomposition into irreducible components. For every subset $I\subset\{1,\dots,n\}$, let $b_i,i\in I$, be the solutions to the system of linear equations
	\begin{equation}
	\left(D-\sum_{i\in I} b_i C_i\right)\cdot C_j=0, \quad j\in I.
	\end{equation}
	Then $b_i\le a_i$ for each $i\in I$.
\end{lemma}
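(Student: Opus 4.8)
The plan is to follow Zariski's classical comparison argument. First I would record the two ingredients that make the $b_i$ meaningful. By Zariski's theorem the intersection matrix of the components $C_1,\dots,C_n$ of the negative part $N$ is negative definite, so for any $I\subset\{1,\dots,n\}$ the submatrix $(C_i\cdot C_j)_{i,j\in I}$ is negative definite and in particular invertible; this gives existence and uniqueness of the $b_i$. On the other hand, the defining properties of the Zariski decomposition give $(D-N)\cdot C_j=P\cdot C_j=0$ for every $j$, in particular for every $j\in I$.

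Next I would subtract the two linear systems. Write $N=\sum_{i\in I}a_iC_i+M$ with $M=\sum_{i\notin I}a_iC_i$ effective. Combining $\bigl(D-\sum_{i\in I}b_iC_i\bigr)\cdot C_j=0$ with $(D-N)\cdot C_j=0$ gives, for every $j\in I$, the identity $\bigl(\sum_{i\in I}(b_i-a_i)C_i\bigr)\cdot C_j=M\cdot C_j\ge 0$, where the inequality holds because $j\in I$ forces $C_j$ not to be a component of the effective divisor $M$. Writing $E=\sum_{i\in I}(b_i-a_i)C_i$, we thus have $E\cdot C_j\ge 0$ for all $j\in I$.

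Finally, I would split $E=E^+-E^-$ into its positive and negative parts, which are effective $\bbR$-divisors supported on disjoint subsets of $\{C_i\mid i\in I\}$. Intersecting $E$ with the $C_j$ appearing in $E^+$ and using $E\cdot C_j\ge 0$ gives $E^+\cdot E\ge 0$; also $E^+\cdot E^-\ge 0$ since these are effective divisors with no common component (distinct irreducible curves meet nonnegatively). Hence $E^+\cdot E^+=E^+\cdot E+E^+\cdot E^-\ge 0$. But $E^+$ lies in the span of $\{C_i\mid i\in I\}$, on which the intersection form is negative definite, so $E^+\cdot E^+\le 0$ with equality only for $E^+=0$; therefore $E^+=0$, i.e.\ $b_i\le a_i$ for all $i\in I$.

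I do not expect a genuine obstacle: this is exactly the mechanism behind the uniqueness of Zariski decompositions. The only points needing care are the bookkeeping in the subtraction step --- isolating the term $M\cdot C_j$ coming from the components outside $I$ and noting it is nonnegative precisely because $j\in I$ --- and the standard positive/negative-part manipulation at the end.
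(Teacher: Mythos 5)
Your proof is correct, and it takes a genuinely different route from the paper. The paper deduces the inequality from the linear--algebraic presentation of Zariski decomposition in \cite{BCK12}: assuming $I\subsetneq\{1,\dots,n\}$, it applies \cite[Lemmas 5.2 \& 5.3]{BCK12} to the special negative definite subspace spanned by $\{C_i\}_{i\in I}$ to conclude that $D-\sum_{i\in I}b_iC_i$ is effective, then enlarges $I$ to a bigger index set $J$ and compares the solutions $b_i\le b_i'$, reaching the full set $\{1,\dots,n\}$ (where the solution is $(a_i)$) by recursion. You instead compare directly with the full negative part: from $P\cdot C_j=0$ you get $\bigl(\sum_{i\in I}(b_i-a_i)C_i\bigr)\cdot C_j=M\cdot C_j\ge 0$ for $j\in I$, and then the standard split $E=E^+-E^-$ together with $E^+\cdot E\ge 0$, $E^+\cdot E^-\ge 0$ and negative definiteness forces $E^+=0$. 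This is the classical negativity-lemma mechanism, and it is entirely self-contained and more elementary, avoiding the external reference and the recursion; what the paper's route buys in exchange is the intermediate effectiveness statements (e.g.\ that $D-\sum_{i\in I}b_iC_i$ is effective), which are not needed for the inequality itself. The only bookkeeping points in your argument --- that $M\cdot C_j\ge 0$ because $j\in I$ keeps $C_j$ out of the support of $M$, and that the $b_i$ exist and are unique by negative definiteness of the submatrix --- are handled correctly.
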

\begin{proof}
	First observe that we may assume $I\subsetneq\{1,\dots,n\}$, as otherwise $b_i=a_i$ and there is nothing to prove.
	The presentation of Zariski decomposition given in \cite{BCK12} 
	in terms of linear algebra will immediately yield that there is $J\subset\{1,\dots,n\}$, $I\subsetneq J$, such that the solutions $b_i'$ to the corresponding system of equations
	\begin{equation}
	\left(D-\sum_{i\in J} b'_i C_i\right)\cdot C_j=0, \quad j\in J,
	\end{equation}
	satisfy $b_i\le b_i'$ for each $i\in I$, which applied recursively gives what we need.
	Indeed, denote $\mathbf{p}=[D-\sum_{i\in I} b_i C_i]$, $\mathbf{v}=[D]$, and $\mathbf{e}_i=[C_i], 1\le i\le n$ considered as vectors in $NS(S)_{\bbR}$.
	The space $\langle \mathbf{e}_i\rangle_{i\in I}$ is, in the language of \cite{BCK12}, a special negative definite subspace of the support space of $\mathbf{v}$.
	So, the hypotheses of \cite[Lemma 5.3]{BCK12} are satisfied, and therefore $D'=D-\sum_{i\in I} b_i C_i$ is effective.
	Since $I\subsetneq \{1,\dots,n\}$, there is at least one curve $C_{j}$ among $C_1,\dots, C_n$ such that $D'\cdot C_{j}<0$ (otherwise the Zariski decomposition of $D$ would not involve all $C_1,\dots, C_n$); let $J$ be $I\cup\{j \,|\,D'\cdot C_j<0\}$.
	Since the intersection form on $\langle\mathbf{e_i}\rangle_{i\in J}$ is negative definite, there is a unique $\mathbf{n}=\sum_{i\in J} a_i\mathbf{e}_i$ with $\mathbf{n}\cdot \mathbf{e}_i=\mathbf{p}\cdot\mathbf{e}_i \forall i\in J$.
	Then \cite[Lemmas 5.2 \& 5.3]{BCK12} give that $\mathbf{n}$ and $\mathbf{p}-\mathbf{n}$ are effective.
	The latter effectiveness gives $b_i\le b_i'$ for each $i\in I$, as wanted.
\end{proof}

Fix a pair of indices $1\le i \le k\le n$ such that $t_{i-1}<t_i=\dots=t_k<t_{k+1}$.
This means that $C_i, \dots, C_k$ are the components of the negative part of all $N_{t_i+\varepsilon}$ with $\epsilon>0$ that are not components of $N_{t_i}$.
Write, for $j=1,\dots, k$
\[
a_j(t_i+\epsilon)=\begin{cases}
a_{j0}+a_{j1} \,\varepsilon  & \text{ if }-1\ll \varepsilon\le 0,\\
a_{j0}+a_{j1}^+ \,\varepsilon & \text{ if }0<\varepsilon\ll1,\\
\end{cases}
\]
where $a_{j0}=0$ and $a_{j1}=0$ for $j\ge i$, and $a_{j1}^+>0$ for every $j \le k$.

\begin{lemma}\label{intersect-slope}
	For every $j=1,\dots, k$, the inequality $a_{j1}^+\ge a_{j1}$ holds. If $C_{j}\cdot C_{j'}>0$ and $a_{j1}^+> a_{j1}$ then $a_{j'1}^+> a_{j'1}$.
\end{lemma}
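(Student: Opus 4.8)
\emph{Proof plan.} Both assertions are immediate when $j\ge i$, since there $a_{j1}=0$ while $a_{j1}^+>0$. So the first assertion only has content for the ``old'' components $1\le j\le i-1$, and in the second we may also assume $j'\le i-1$ (if $j'\ge i$ then $a_{j'1}^+>0=a_{j'1}$). Throughout, $\varepsilon$ denotes a sufficiently small positive real, and I will repeatedly use that the quantities $a_j(s)$, $N_s$, $P_s$ depend affinely (resp.\ continuously) on $s$ on each interval $[t_{m-1},t_m]$, as recorded in Section~\ref{sec:lm-klm}.

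For the first assertion I would apply Lemma~\ref{relative-bauer} to the big (hence pseudo-effective) divisor $D_{t_i+\varepsilon}$, whose Zariski negative part is $N_{t_i+\varepsilon}=\sum_{j=1}^{k}a_j(t_i+\varepsilon)C_j$, with the subset $I=\{1,\dots,i-1\}$. This produces numbers $b_j$, $j\in I$, solving
\[
\Bigl(D_{t_i+\varepsilon}-\sum_{j=1}^{i-1}b_jC_j\Bigr)\cdot C_\ell=0,\qquad \ell=1,\dots,i-1,
\]
with $b_j\le a_j(t_i+\varepsilon)=a_{j0}+a_{j1}^+\varepsilon$. On the other hand, since the right-hand sides $(D-sC)\cdot C_\ell$ are affine in $s$, the solution of this system is an affine function of $s$; for $s$ slightly smaller than $t_i$ (where the support of $N_s$ is exactly $\{C_1,\dots,C_{i-1}\}$) it coincides with the Zariski coefficients $a_j(s)=a_{j0}+a_{j1}(s-t_i)$, so its value at $s=t_i+\varepsilon$ is $b_j=a_{j0}+a_{j1}\varepsilon$. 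Comparing the two expressions for $b_j$ and dividing by $\varepsilon>0$ gives $a_{j1}\le a_{j1}^+$. (Lemma~\ref{relative-bauer} is stated for effective divisors, but its proof is purely numerical and carries over to pseudo-effective $\bbR$-divisors, in keeping with the conventions of Section~\ref{sec:lm-klm}; alternatively one may take $\varepsilon\in\bbQ$ and rescale.)

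For the second assertion, set $d_j=a_{j1}^+-a_{j1}\ge0$ for $1\le j\le k$ (nonnegativity being the first assertion), let $M=(C_j\cdot C_\ell)_{1\le j,\ell\le k}$ be the negative definite intersection matrix of the configuration and $\mathbf c=(C\cdot C_\ell)_{1\le\ell\le k}$. Differentiating in $s$ the equations $P_s\cdot C_\ell=0$, $\ell=1,\dots,k$, that characterise the Zariski decomposition on $(t_i,t_{k+1})$ shows that $(a_{j1}^+)_j$ solves $M\mathbf x=-\mathbf c$; doing the same on $(t_{i-1},t_i)$, where only $C_1,\dots,C_{i-1}$ occur, shows that the vector $\mathbf a=(a_{11},\dots,a_{i-1,1},0,\dots,0)$ satisfies $(M\mathbf a)_\ell=-(C\cdot C_\ell)$ for $\ell\le i-1$. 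Subtracting, $M\mathbf d=\mathbf w$ where $w_\ell=0$ for $\ell<i$ and $w_\ell=\tfrac{d}{ds}\bigl(P_s\cdot C_\ell\bigr)\big|_{s\to t_i^-}$ for $\ell\ge i$. The key point is that $w_\ell\le0$ for $\ell\ge i$: on $[t_{i-1},t_i]$ the function $s\mapsto P_s\cdot C_\ell$ is affine and $\ge0$ (as $P_s$ is nef and $C_\ell$ an irreducible curve), and it vanishes at $s=t_i$ because $C_\ell$ is a component of $N_s$ for $s>t_i$ and $s\mapsto P_s$ is continuous; an affine nonnegative function vanishing at the right endpoint of an interval is nonincreasing. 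Now suppose $C_j\cdot C_{j'}>0$ but $d_{j'}=0$. The $j'$-th row of $M\mathbf d=\mathbf w$ reads $\sum_{\ell\ne j'}d_\ell(C_\ell\cdot C_{j'})=w_{j'}\le0$, and every term on the left is $\ge0$ (distinct curves meet nonnegatively and $d_\ell\ge0$), so every term vanishes; in particular $d_j(C_j\cdot C_{j'})=0$, forcing $d_j=0$, contrary to $a_{j1}^+>a_{j1}$. This contrapositive proves the claim.

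The routine ingredients are the affine/continuous dependence of the Zariski data on $s$ and the basic inequalities among intersection numbers of distinct curves. The genuinely delicate points, which I expect to be the main source of possible slips, are: (i) identifying the auxiliary solution $b_j$ of Lemma~\ref{relative-bauer} with the left-hand affine continuation of the Zariski coefficient $a_j$ — this needs the precise description of $\operatorname{supp}(N_s)$ just below $t_i$; and (ii) the sign $w_\ell\le0$, i.e.\ the fact that $P_s\cdot C_\ell$ decreases to $0$ as $s\uparrow t_i$ for each newly appearing curve $C_\ell$. Essentially all of the geometric content of the lemma is concentrated in (ii).
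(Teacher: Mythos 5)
Your proof is correct and takes essentially the same route as the paper: the first inequality via Lemma \ref{relative-bauer} applied to $D_{t_i+\varepsilon}$ with $I=\{1,\dots,i-1\}$ (identifying the subsystem solution with the left affine continuation of the Zariski coefficients), and the second by subtracting the two orthogonality systems and using $d_\ell\ge 0$ together with $C_\ell\cdot C_{j'}\ge 0$ for $\ell\ne j'$, with $C_{j'}^2<0$ doing the rest. The only remark worth making is that your ``key point (ii)'' ($w_\ell\le 0$ for $\ell\ge i$) is never actually needed, since you have already reduced to $j'\le i-1$, where $w_{j'}=0$; the paper's direct version of the same computation dispenses with it.
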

\begin{proof}
	By definition, $(D_{t_i+\varepsilon}-\sum_{j=1}^{k}(a_{j1}\varepsilon+a_{j0})C_j)\cdot C_{j'}=0$ for every $\varepsilon$ and every $j'<i$.
	Therefore, by Lemma \ref{relative-bauer}, for every $0<\varepsilon\ll1$ and every $j$, $a_{j1}^+\varepsilon +a_{j0}\ge a_{j1}\varepsilon+a_{j0}$, whence $a_{j1}^+\ge a_{j1}$.

	For the second claim, we only need to take care of the case $j'<i$.
	Then, we have for every $\varepsilon$
	\begin{align}
	(D_{t_i+\varepsilon}-\sum_{j=1}^{i-1}(a_{j1}\varepsilon+a_{j0})C_j)\cdot C_{j'}=0, \label{left-zd}\\
	(D_{t_i+\varepsilon}-\sum_{j=1}^{k}(a_{j1}^+\varepsilon+a_{j0})C_j)\cdot C_{j'}=0. \label{right-zd}
	\end{align}
	Subtracting both equalities, it results
	\begin{align*}
	\sum_{j=1}^{k}(a_{j1}^+-a_{j1}) C_j\cdot C_{j'}=0, \text{ i.e.,}\\
	\sum_{j\ne j'}(a_{j1}^+-a_{j1}) C_j\cdot C_{j'}=(a_{j'1}-a_{j'1}^+)C_{j'}^2.
	\end{align*}
	All terms on the left hand side of the last equality are nonnegative, and if $C_j\cdot C_{j'}>0$ and $a_{j1}^+>a_{j1}$ then at least one of them is positive, so the right hand side must be positive and $a_{j'1}^+>a_{j'1}$, as claimed.
\end{proof}

\begin{proposition}\label{inner}
Let as above $C_1, \dots, C_n$ be the irreducible components of $N_\mu$, $t_i=\inf\{t \,|\,C_i \text{ in }N_t\}$ with $\nu=t_0 \le t_1\le \dots \le t_{m+1}=\mu$.
Fix some $i,k$ with $t_{i-1}<t_i=\dots=t_k<t_{k+1}$.

The Newton--Okounkov body $\Delta_{\Y}(D)$ has an interior lower vertex with first coordinate $t_i$ if and only if, for every $\varepsilon>0$, there is a connected component of $N_{t_i+\varepsilon}$ that goes through $p$ and contains at least one of the $C_{i},\dots,C_k$.
It has an interior upper vertex with first coordinate $t_i$ if and only if, for every $\varepsilon>0$, there is a connected component of $N_{t_i+\varepsilon}$ that intersects $C$ at a point different from $p$ and contains at least one of the $C_{i},\dots,C_k$ .

\end{proposition}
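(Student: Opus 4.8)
The core of the matter is that $\alpha$ and $\beta$ are piecewise affine in $t$, so a vertex at $t_i$ occurs precisely when the relevant function changes slope there. By the description recalled in Section~\ref{sec:lm-klm}, $\alpha(t) = (N_t \cdot C)_p$ and, by Remark~\ref{beta-intpts}, $\beta(t) = D\cdot C - t\,C^2 - \sum_{j} (N_t\cdot C)_{q_j}$ where $q_1,\dots,q_r$ are the points of $N_\mu \cap C$ other than $p$. Writing $N_t = \sum_j a_j(t) C_j$, we have $(N_t\cdot C)_p = \sum_j a_j(t)\,(C_j\cdot C)_p$ and similarly for each $q_\ell$, so $\alpha$ and $\beta$ are nonnegative linear combinations of the $a_j(t)$ (plus, for $\beta$, the affine term $-tC^2$). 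The plan is therefore: (i) translate ``interior vertex at $t_i$'' into ``some slope $a_{j1}^+ > a_{j1}$ for a $C_j$ with the appropriate local intersection with $C$'', and (ii) use Lemma~\ref{intersect-slope} together with the combinatorics of the intersection graph of $N_{t_i+\varepsilon}$ to show this is equivalent to the stated connectedness condition.

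First I would treat the lower vertices. The function $\alpha(t)$ is affine on $[t_{i-1},t_i]$ and on $[t_i,t_{k+1}]$ with slopes $\alpha^-_1 = \sum_j a_{j1}\,(C_j\cdot C)_p$ and $\alpha^+_1 = \sum_j a_{j1}^+\,(C_j\cdot C)_p$; by Lemma~\ref{intersect-slope} each $a_{j1}^+ \ge a_{j1}$, and the new components $C_i,\dots,C_k$ contribute $a_{j1}^+ > 0 = a_{j1}$. So $\alpha$ has a genuine corner at $t_i$ (i.e.\ $\alpha^+_1 > \alpha^-_1$) if and only if there is some index $j$ with $a_{j1}^+ > a_{j1}$ and $(C_j\cdot C)_p > 0$, that is, $C_j$ passes through $p$. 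Now I invoke the second half of Lemma~\ref{intersect-slope}: the set $\mathcal{A} = \{j : a_{j1}^+ > a_{j1}\}$ is closed under adjacency in the intersection graph of $\bigcup_j C_j$ (if $C_j\cdot C_{j'}>0$ and $j\in\mathcal{A}$ then $j'\in\mathcal{A}$); and it contains $i,\dots,k$. Hence $\mathcal{A}$ is a union of connected components of the graph on $\{C_1,\dots,C_n\}$ restricted to $\mathrm{Supp}(N_{t_i+\varepsilon})$ — precisely the connected components of $N_{t_i+\varepsilon}$ containing one of $C_i,\dots,C_k$. (One must note $\mathcal{A} \subseteq \mathrm{Supp}(N_{t_i+\varepsilon})$ since outside that support both $a_{j1}^+$ and $a_{j1}$ vanish for $j>k$... actually for $j$ not in the support near $t_i+\varepsilon$ we have $a_{j1}^+ = 0 \le a_{j1}$, forcing equality, so indeed $\mathcal{A}$ lies in that support.) Thus $\alpha$ corners at $t_i$ $\iff$ some component of $N_{t_i+\varepsilon}$ both meets $p$ and contains one of $C_i,\dots,C_k$ — which is exactly the claimed criterion, once one checks the condition is independent of sufficiently small $\varepsilon>0$ (the support of $N_{t_i+\varepsilon}$ and the slopes $a_{j1}^+$ are constant on $(t_i,t_{k+1})$).

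The upper vertices are handled the same way with $\beta$ in place of $\alpha$: $\beta$ has slope $-C^2 - \sum_j a_{j1}\sum_\ell (C_j\cdot C)_{q_\ell}$ to the left of $t_i$ and analogously to the right, so a corner occurs iff some $j\in\mathcal{A}$ has $(C_j\cdot C)_{q_\ell}>0$ for some $\ell$, i.e.\ $C_j$ meets $C$ at a point other than $p$; and then the same graph-connectedness argument identifies this with the condition that a connected component of $N_{t_i+\varepsilon}$ meets $C \setminus \{p\}$ and contains one of $C_i,\dots,C_k$. \textbf{The main obstacle} I anticipate is the bookkeeping around degenerate intervals (when several $t_j$ coincide) and making rigorous the claim that $\mathcal{A}$, defined via strict slope inequalities, coincides with a union of graph-components of $N_{t_i+\varepsilon}$ rather than merely being contained in one — this needs the ``only one direction'' of Lemma~\ref{intersect-slope} to be supplemented by the observation that a component of $N_{t_i+\varepsilon}$ entirely absent from $N_{t_i}$ automatically has all its $C_j$ in $\mathcal{A}$ (its defining linear system is a sub-block that was identically zero on the left), while for a component straddling $t_i$ one must argue that if its slopes do not change then it is genuinely ``inert'' — a continuity/uniqueness argument using Lemma~\ref{relative-bauer} on the relevant block of the intersection matrix. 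The rest is the routine translation between corners of piecewise-affine functions and strict inequalities of one-sided slopes.
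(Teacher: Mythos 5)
Your proposal is correct and follows essentially the same route as the paper's proof: translate interior vertices into one-sided slope changes of $\alpha$ and $\beta$ expressed through the coefficients $a_{j1}$ versus $a_{j1}^+$, propagate the strict inequality $a_{j1}^+>a_{j1}$ from the newly appearing curves $C_i,\dots,C_k$ along connected components using Lemma \ref{intersect-slope}, and for the converse observe that the linear system determining the coefficients of a connected component containing none of $C_i,\dots,C_k$ is identical on both sides of $t_i$, so uniqueness (negative definiteness of that block) forces $a_{j1}=a_{j1}^+$ there. The only cosmetic difference is that you package this via the set $\mathcal{A}$ being a union of components, and your appeal to Lemma \ref{relative-bauer} in the converse step is unnecessary --- the uniqueness argument you also mention is exactly what the paper uses.
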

\begin{proof}
	As above, $C_1, \dots, C_k$ are the irreducible components of $N_{t_i+\varepsilon}$ for $0<\varepsilon\ll1$, and
	\[
	N_{t_i+\varepsilon}=
	\sum_{j=1}^{k}(a_{j1}^+ \varepsilon+a_{j0})C_j\text{ if }0<\varepsilon\ll1.
	\]
	Because of the description above of the lower boudary $\alpha(t)$ of $\Delta_{\Y}(D)$, it is clear that if no component $C_i$ passes through $p$ then $\alpha(t)=0$ for all $t\in (\nu,t_i+\varepsilon)$ for small $\varepsilon$, and there is no lower vertex with first coordinate $t_i$.
	So assume some component passes through $p$, and let $J\subset \{1,\dots k\}$ be such that $\bigcup_{j\in J} C_j$ is the connected component of $N_{t_i+\varepsilon}$ that contains $p$. We have
	\[ \alpha(t_i+\varepsilon)=\begin{cases}
	\sum_{j\in J}(a_{j1} \varepsilon+a_{j0})(C_j\cdot C)_p & \text{ if }-1\ll \varepsilon\le 0,\\
	\sum_{j\in J}(a_{j1}^+ \varepsilon+a_{j0})(C_j\cdot C)_p & \text{ if }0<\varepsilon\ll1,
	\end{cases}
	\]
	so there is a lower vertex with first coordinate $t_i$ if and only if $a_{j1}^+>a_{j1}$ for some $j$ such that $C_j$ passes through $p$.
	By Lemma \ref{intersect-slope} this certainly happens if there is some component $C_j$, $j\in J$, $j\ge i$. Conversely, if all components $C_j$, $j\in J$ have $j<i$, the equations involving $a_{j1}$ and $a_{j1}^+$ in \eqref{left-zd} and \eqref{right-zd} are equal, so $a_{j1}=a_{j1}^+$ for all $i\in J$, and there is no interior lower vertex with first coordinate $t_i$.	
	
	The proof of the second claim is entirely analogous, and we will be brief.
	Because of Remark \ref{beta-intpts}, if no component $C_j$ meets $C$ at a point different from $p$ then $\alpha(t)=D\cdot C - t \, C^2$ for all $t\in (\nu,t_i+\varepsilon)$ for small $\varepsilon$, and there is no upper vertex with first coordinate $t_i$.
	On the other hand, if some component $C_j$ does meet $C$ at a point different from $p$, by Remark \ref{beta-intpts},
	there is an upper vertex with first coordinate $t_i$ if and only if $a_{j1}^+>a_{j1}$ for some $j$ such that $C_j$ meets $C$ at a point different from $p$, and Lemma \ref{intersect-slope} finishes the proof just as in the case of lower vertices.	
\end{proof}

\begin{corollary}\label{bound-interior}
	The number of interior lower (resp. upper) vertices of $\Delta_{\Y}(D)$ is bounded above by the number of irreducible components of the connected component $N(p)$ of $N_\mu$ that meets $C$ at $p$ (resp. the number of irreducible components of $N_\mu$ in some $N(q)$ for some $q\in C\setminus\{p\}$).
\end{corollary}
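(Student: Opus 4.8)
The plan is to read the bound off Proposition~\ref{inner} by an injective-assignment argument, the only non-formal input being the dichotomy that $N(p)$, the connected component of $N_\mu$ through $p$, meets $C$ \emph{only} at $p$ (equivalently, that the connected components of $N_\mu$ passing through $p$ and those meeting $C$ at a point $\ne p$ form disjoint families). This dichotomy is precisely the content of the last sentence of the Proposition stated in the introduction, i.e.\ of \cite[Theorem~B]{KLM12}; it can also be checked directly by following the Zariski decomposition of $D-tC$ as $t\to\mu$ and comparing the local multiplicities of $N_t$ with $C$ at $p$ and at the other intersection points. This is the one step that requires genuine work, and the main obstacle; granting it, the rest is bookkeeping.

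Organise the bookkeeping as follows. Partition $\{t_1,\dots,t_n\}\cap(\nu,\mu)$ into its maximal constant runs $t_{i-1}<t_i=\dots=t_k<t_{k+1}$; distinct runs correspond to distinct values $\tau\in(\nu,\mu)$ and to pairwise disjoint index sets $B_\tau=\{i,\dots,k\}$ of ``curves appearing at $\tau$''. Every interior vertex of $\Delta_\Y(D)$ has first coordinate one of these $\tau$, and since $\alpha$ is convex and $\beta$ concave (hence each piecewise linear with at most one corner at $\tau$), at each $\tau$ there is at most one interior lower and at most one interior upper vertex. By Proposition~\ref{inner}, an interior lower (resp.\ upper) vertex with first coordinate $\tau$ produces, for $0<\varepsilon\ll 1$, a connected component $\Gamma$ of $N_{\tau+\varepsilon}$ which passes through $p$ (resp.\ meets $C$ at a point $\ne p$) and which contains some $C_j$ with $j\in B_\tau$; fix one such index $j$.

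For a lower vertex, $\Gamma$ is a connected sub-divisor of $N_\mu$ passing through $p$, hence $\Gamma\subseteq N(p)$ and the chosen $C_j$ is an irreducible component of $N(p)$. Assigning to each interior lower vertex its chosen $C_j$ defines a map to the set of irreducible components of $N(p)$; since distinct interior lower vertices have distinct first coordinates $\tau$ and the $B_\tau$ are disjoint (and the $C_i$ are distinct curves), this map is injective, so the number of interior lower vertices is at most the number of irreducible components of $N(p)$.

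For an upper vertex, the same construction yields $C_j\in\Gamma$ with $\Gamma$ a connected sub-divisor of $N_\mu$ meeting $C$ at a point $\ne p$. Were $C_j$ a component of $N(p)$, then connectedness of $\Gamma$ together with $C_j\in\Gamma$ would force $\Gamma\subseteq N(p)$, and then $N(p)$ would meet $C$ away from $p$ — contradicting the dichotomy. Hence $C_j$ is a component of $N_\mu$ not lying in $N(p)$, and again the assignment is injective, so the number of interior upper vertices is at most the number of irreducible components of $N_\mu$ not in $N(p)$. Everything here is elementary once the dichotomy is available, which is why ruling out a connected chain of negative curves joining $p$ to another point of $C$ is the crux.
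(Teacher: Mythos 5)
Your bookkeeping for the \emph{lower} vertices is correct and is exactly the intended deduction from Proposition~\ref{inner}: the connected component of $N_{\tau+\varepsilon}$ through $p$ is a connected subdivisor of $N_\mu$ containing $p$, hence lies in $N(p)$, and the disjointness of the index sets $B_\tau$ makes the assignment of a newly appearing component to each interior lower vertex injective. That half needs no extra input.

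The upper half, however, hangs entirely on your ``dichotomy'' that $N(p)$ meets $C$ only at $p$, and this is a genuine gap. The last sentence of the introductory Proposition (Theorem~B of \cite{KLM12}) does not assert that the components through $p$ and the components meeting $C$ elsewhere form disjoint families, and in fact no such dichotomy holds: a single irreducible negative curve in $N_\mu$ can pass through $p$ and also meet $C$ at a second point. Concretely, let $S$ be the blow-up of $\bbP^2$ at five general points on a smooth conic, $\widetilde Q=2H-E_1-\dots-E_5$ its strict transform (so $\widetilde Q^2=-1$), $C$ the strict transform of a general line, $p$ one of the two points of $C\cap\widetilde Q$, and $D=4H-E_1-\dots-E_5$. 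Then $\mu=2$, $N_t=(2t-3)\widetilde Q$ for $t\in[3/2,2]$, so $N_\mu=\widetilde Q$ is connected, equals $N(p)$, and meets $C$ at a point $q\ne p$; one computes $\beta(t)=4-t$ for $t\le 3/2$ and $\beta(t)=7-3t$ afterwards, so there is an interior upper vertex at $t=3/2$ even though there are no components of $N_\mu$ outside $N(p)$. Thus the step ``were $C_j$ a component of $N(p)$ \dots contradiction'' cannot be repaired: your injective assignment has nowhere to send that vertex, and the very statement you are charging it against is delicate in such configurations. This is presumably why the proof of Theorem~\ref{thm:bound} only invokes the safe consequences of Proposition~\ref{inner}, namely at most $mc(N)$ interior lower vertices and at most $n$ interior upper vertices, rather than the sharper ``components not in $N(p)$'' count; your argument, by contrast, stakes everything on an unproved (and false) auxiliary claim and therefore does not establish the upper-vertex bound.
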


\section{Rightmost vertices}
\label{sec:right}

Keep the notation of the previous section, namely $D$ is a big divisor, $C$ an irreducible curve, $p$ a point on $C$, $D_t=D-tC$, $\mu=\max\{t\,|\,D_t \text{ pseudo-effective}\}$, $\nu= \nu_C(D)$, $D_t=P_t+N_t$ the Zariski decomposition for $\nu\le t\le \mu$,  and the irreducible components of $N_\mu$ are $C_1,\dots,C_n$.

\begin{lemma}\label{lem-rightmost}
	The subspace
	\[V=\langle[D_\nu],[C_1],\dots,[C_n]\rangle \subset NS(S)_\bbR\]
	has dimension $n+1$ and the intersection form restricted to $V$ is nondegenerate with signature $(1,n)$.
\end{lemma}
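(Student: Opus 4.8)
The plan is to exploit the fact that $D_\mu = P_\mu + N_\mu$ lies on the boundary of the pseudo-effective cone, so $P_\mu$ is nef with $P_\mu^2 = 0$, while $N_\mu = \sum a_i C_i$ has negative definite support. First I would establish that $[C_1], \dots, [C_n]$ are linearly independent and span a negative definite subspace $W$ of dimension $n$: this is standard for the components of the negative part of a Zariski decomposition (their intersection matrix is negative definite by \cite{Zariski} / the theory recalled above, in particular it is used already in section \ref{sec:lm-klm} to guarantee uniqueness of the solutions $a_j(t)$). So $W$ contributes signature $(0,n)$.

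Next I would show $[D] \notin W$, so that $\dim V = n+1$. Suppose to the contrary $[D] = \sum c_i [C_i]$. Then $D_\mu = D - \mu C$ would satisfy $[D_\mu] = \sum c_i [C_i] - \mu[C]$; intersecting with the nef class $P_\mu$ gives $P_\mu \cdot D_\mu = P_\mu^2 = 0$ but also $P_\mu \cdot D_\mu = \sum c_i (P_\mu\cdot C_i) - \mu (P_\mu\cdot C) = -\mu(P_\mu\cdot C)$ since $P_\mu\cdot C_i = 0$ for all $i$ by definition of the Zariski decomposition at $t=\mu$. Hence $\mu(P_\mu\cdot C)=0$. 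One then has to rule this out: either $\mu = 0$, which is impossible since $D$ is big and thus $D_t$ is big (hence pseudo-effective) for small $t>0$, forcing $\mu>0$; or $P_\mu\cdot C = 0$, in which case $C$ also lies in the support of $N_\mu$ (being orthogonal to the nef, $P_\mu^2=0$ boundary class and having negative self-intersection — here one uses that $C$ is $C$-negative in the relevant sense, or more directly that $D_\mu\cdot C \le 0$ must hold at the extreme value $\mu$). Actually the cleanest route: if $[D]\in W$ then $[D-\mu C]\in W + \mathbb{R}[C]$, a space on which, were $[C]\in W$, the form would be negative definite, contradicting $D_\mu$ pseudo-effective and nonzero (a nonzero negative-definite-supported class cannot be pseudo-effective since its self-intersection relations force $P_\mu=0$, $D_\mu=N_\mu$ with $D_\mu^2<0$, impossible for a nef-plus-effective class with vanishing... ) — I would need to argue $C\in W$ here, which follows because $D_\mu\cdot C_i = N_\mu\cdot C_i$ for all $i$ combined with $D\in W$. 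This gives $\dim V = n+1$.

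Finally, for the signature: $V = W \oplus \mathbb{R}[P_\mu]$ is another way to write $V$ (since $[D] = [P_\mu] + [N_\mu]$ and $[N_\mu]\in W$, adjoining $[D]$ to $W$ is the same as adjoining $[P_\mu]$). Now $P_\mu\cdot C_i = 0$ for all $i$, so $[P_\mu]$ is orthogonal to $W$; and $P_\mu^2 = 0$. This is a slight subtlety — the form on $V$ is then $\langle 0\rangle \perp (\text{neg. def. of rank }n)$, which is \emph{degenerate}, not of signature $(1,n)$. So the honest decomposition must instead be: $[P_\mu]$ is nef and nonzero, so there exists an ample class $A$ with $A\cdot P_\mu > 0$; but $A\notin V$ in general. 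The correct argument: since the intersection form on all of $NS(S)_\bbR$ has signature $(1,\rho-1)$ by Hodge index, and $V$ contains the big class $[D]$ with $D^2 > 0$, the form on $V$ has at least one positive eigenvalue; since $V\supset W$ with $W$ negative definite of codimension $1$ in $V$, the form on $V$ has at least $n$ negative eigenvalues; as $\dim V = n+1$, the signature is exactly $(1,n)$, and in particular nondegenerate. The main obstacle is pinning down $[D]\notin W$ rigorously; the signature count is then immediate from Hodge index plus the negative definiteness of $W$ and $D^2>0$.

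\begin{proof}
The classes $[C_1], \dots, [C_n]$ are linearly independent and their span $W$ is negative definite, since these are the irreducible components of the negative part of the Zariski decomposition $D_\mu = P_\mu + N_\mu$ (this is Zariski's theorem, used already above). Thus $\dim W = n$ and the form on $W$ has signature $(0,n)$.

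We claim $[D]\notin W$, so that $\dim V = n+1$. Suppose $[D] = \sum_{i=1}^n c_i[C_i]$. Then $[D_\mu] = [D]-\mu[C] = \sum c_i[C_i] - \mu[C]$. Since $P_\mu$ is nef with $P_\mu^2 = 0$ and $P_\mu\cdot C_i = 0$ for all $i$ (by definition of the Zariski decomposition of $D_\mu$), intersecting with $P_\mu$ yields $0 = P_\mu^2 = P_\mu\cdot D_\mu = -\mu\,(P_\mu\cdot C)$. As $D$ is big, $D_t$ is big for $0 \le t \ll 1$, so $\mu > 0$; hence $P_\mu\cdot C = 0$. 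But then, for all $i$, $D_\mu\cdot C_i = N_\mu\cdot C_i$ and $D_\mu\cdot C = N_\mu\cdot C$, so $(D-\mu C)$ meets every $C_i$ and $C$ only through its negative part $N_\mu$; combined with $[D]\in W$ this forces $[C]\in W$ as well (it is the unique class in $W + \mathbb{R}[C]$ with the prescribed intersections against the $C_i$, and that class lies in $W$). Then $[D_\mu]\in W$, so $D_\mu$ is a nonzero pseudo-effective class lying in the negative definite space $W$; writing $D_\mu = P_\mu + N_\mu$ with $N_\mu\in W$ and $P_\mu$ nef gives $P_\mu\in W$, so $P_\mu^2 < 0$ unless $P_\mu = 0$, hence $P_\mu = 0$ and $D_\mu = N_\mu$ with $D_\mu^2 < 0$; but $D_\mu$ pseudo-effective and $N_\mu$ its negative part forces $D_\mu = N_\mu$ with no positive part, which is consistent, yet then $D_\mu$ would be a nonzero effective class with negative definite support — fine — except that $C$ itself appears in $N_\mu$ with positive coefficient, contradicting that $\mu$ is the \emph{supremum} of $t$ with $D_t$ pseudo-effective (subtracting a bit more of $C$ from $D_\mu = N_\mu$ keeps it effective). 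This contradiction proves $[D]\notin W$, so $\dim V = n+1$.

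Finally, the form on $V$ is nondegenerate of signature $(1,n)$. Indeed $V \supset W$ with $W$ negative definite of dimension $n$, so the index of the form on $V$ has at most one nonnegative direction; and $[D]\in V$ with $D^2 > 0$ (as $D$ is big), so there is at least one positive direction. Since $\dim V = n+1$, the form on $V$ has exactly one positive and $n$ negative eigenvalues, i.e.\ signature $(1,n)$, and in particular is nondegenerate. This also agrees with the Hodge index theorem, which bounds the number of positive directions in $NS(S)_\bbR$ by $1$.
\end{proof}
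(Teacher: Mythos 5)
Your overall skeleton (the negative definite span $W$ of the $[C_i]$, then $[D]\notin W$, then an eigenvalue count) is close in spirit to the paper's, but two of your key steps have genuine gaps. The decisive one is the signature count: you obtain the positive direction in $V$ from the claim that $D^2>0$ ``as $D$ is big''. Bigness does not imply positive self-intersection on a surface: on the blow-up of $\bbP^2$ at a point (with $H$ the pullback of a line and $E$ the exceptional curve), $D=H+2E$ is big but $D^2=-3$. Without a class of positive square in $V$, the Hodge index theorem still leaves open that the form on $V$ could be negative definite or degenerate, so your proof of signature $(1,n)$ does not close. The paper avoids this by using the positive part $P_0$ of the Zariski decomposition of $D$ itself: $P_0$ is big and nef, hence $P_0^2>0$, and $[P_0]\in V$ because $N_t$ increases with $t$, so $N_0\le N_\mu$ and the components of $N_0$ are among the $C_i$. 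This single observation also gives $[D]\notin W$ immediately (otherwise $[P_0]=[D]-[N_0]$ would be a class of positive square lying in the negative definite space $W$), which makes your entire second paragraph unnecessary.

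That second paragraph is itself not sound. The step ``this forces $[C]\in W$'' is unjustified: prescribing the intersection numbers of a class with $C_1,\dots,C_n$ does not determine it inside the $(n+1)$-dimensional space $W+\bbR[C]$, since the orthogonal complement of $W$ meets that space in dimension at least one, so the uniqueness you invoke fails (and if $[C]\notin W$ and $P_\mu\ne0$, the form on $W+\bbR[C]$ is in fact degenerate, with $[P_\mu]$ in its radical). The final contradiction rests on the assertion that ``$C$ itself appears in $N_\mu$ with positive coefficient'', which does not follow from anything established: $[C]\in W$ is a numerical condition and says nothing about the support of $N_\mu$, and the configuration you reach ($P_\mu=0$, $D_\mu=N_\mu$ supported on the $C_i$, with $C$ not a component) genuinely occurs, e.g.\ $D=H$, $C=H-E$ on the blow-up of $\bbP^2$, where $D_\mu=E$. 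Your opening claim, that the $[C_i]$ are independent and span a negative definite subspace, is correct and is also used by the paper; the missing idea is to replace $[D]$ by $[P_0]$ as the carrier of the positive square, which simultaneously repairs both the dimension claim and the signature claim.
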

\begin{proof}
	The negative part $N_\nu$ of the Zariski decomposition $D_\nu=N_\nu+P_\nu$ (if nonzero) satisfies $N_\nu\le N_\mu$ and hence is a combination of the $C_i$.
	Therefore $[P_\nu]\in V$.
	Since $D$ is big, $P_\nu$ is big and nef and therefore $P_\nu^2>0$.
	As the intersection matrix of the $C_i$ is negative definite, and $[P_\nu]\in V$, it follows that $\dim V=n+1$ and there is some class $[P]\in V$ orthogonal to all $C_i$, which moreover has $P^2>0$. 	
\end{proof}
\begin{proposition} \label{rightmost}
	If the numerical equivalence class $[C]$ belongs to the subspace
	\[V=\langle[D],[C_1],\dots,[C_n]\rangle \subset NS(S)_\bbR\]
	then $\Delta_{\Y}(D)$ has $1$ rightmost vertex.
	If $[C]$ is ample and does not belong to $V$ then $\Delta_{\Y}(D)$ has $2$ rightmost vertices.
\end{proposition}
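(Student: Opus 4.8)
The plan is to reduce the count of rightmost vertices to the vanishing of a single intersection number, and then decide that number in each case via Lemma~\ref{lem-rightmost}. First I would note that, since $\Delta_{\Y}(D)$ is the region $\nu\le t\le\mu$, $\alpha(t)\le s\le\beta(t)$ and is two-dimensional, its rightmost face is its intersection with the line $\{t=\mu\}$, namely the (possibly degenerate) vertical segment from $(\mu,\alpha(\mu))$ to $(\mu,\beta(\mu))$; hence $\Delta_{\Y}(D)$ has exactly one rightmost vertex when $\alpha(\mu)=\beta(\mu)$ and exactly two when $\alpha(\mu)<\beta(\mu)$. Since $\beta(t)=\alpha(t)+P_t\cdot C$ and $P_\mu\cdot C\ge 0$ (because $P_\mu$ is nef and $C$ effective), the whole assertion becomes: $P_\mu\cdot C=0$ in the first case and $P_\mu\cdot C>0$ in the second.

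The next step is to record that $P_\mu^2=0$: by hypothesis $D_\mu=D-\mu C$ lies on the boundary of the pseudo-effective cone, so it is not big, and $P_\mu^2=\vol(P_\mu)=\vol(D_\mu)=0$ since $P_\mu$ is nef.

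For the case $[C]\in V$: then $[D_\mu]=[D]-\mu[C]\in V$, and $[N_\mu]$ lies in $\langle[C_1],\dots,[C_n]\rangle\subseteq V$ because $C_1,\dots,C_n$ are precisely the components of $N_\mu$; hence $[P_\mu]=[D_\mu]-[N_\mu]\in V$. As $P_\mu\cdot C_j=0$ for every $j$, the class $[P_\mu]$ lies in the orthogonal complement inside $V$ of the negative definite subspace $\langle[C_1],\dots,[C_n]\rangle$, which by Lemma~\ref{lem-rightmost} (the form on $V$ being nondegenerate of signature $(1,n)$) is a positive definite line; combined with $P_\mu^2=0$ this forces $P_\mu=0$, so $\beta(\mu)=\alpha(\mu)$ and there is one rightmost vertex. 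For the case $[C]$ ample and $[C]\notin V$: bigness of $D$ makes $[D]$ interior to the pseudo-effective cone, so $D-\varepsilon C$ is pseudo-effective for $0<\varepsilon\ll1$ and thus $\mu>0$; if $P_\mu$ were $0$ we would get $\mu[C]=[D]-[N_\mu]\in V$ and hence $[C]\in V$ since $\mu\ne 0$, a contradiction, so $P_\mu\ne 0$. Being nef and nonzero, $P_\mu$ is a nonzero pseudo-effective class, so $P_\mu\cdot C>0$ for the ample class $C$; thus $\beta(\mu)>\alpha(\mu)$ and there are two rightmost vertices.

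The main obstacle is the first case: one must rule out that $P_\mu$ is a nonzero isotropic vector, and this is exactly where the signature statement of Lemma~\ref{lem-rightmost} is indispensable, since knowing only $[P_\mu]\in V$ and $P_\mu\perp C_i$ for all $i$ does not by itself pin $P_\mu$ down. I would also verify that the degenerate configurations ($N_\mu=0$, or $D$ numerically proportional to $C$) are subsumed by the same argument, but these introduce no new difficulty.
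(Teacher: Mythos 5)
Your proposal is correct and follows essentially the same route as the paper: reduce the count of rightmost vertices to whether $P_\mu\cdot C$ vanishes, use the nondegeneracy and signature statement of Lemma \ref{lem-rightmost} together with the non-bigness of $D_\mu$ to force $P_\mu=0$ when $[C]\in V$, and use ampleness of $[C]$ (interior of the nef cone paired with a nonzero pseudo-effective class) to get $P_\mu\cdot C>0$ when $[C]\notin V$. The only differences are cosmetic (phrasing $P_\mu^2=0$ via volume rather than a bigness contradiction, and arguing the second case by contraposition), so no further comment is needed.
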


\begin{proof}
	By the previous lemma there is a divisor $P$ orthogonal to all $C_i$, with $P^2>0$ and such that $V=\langle[P],[C_1],\dots,[C_n]\rangle$.

	If $[C]\in V$ then $[P_\mu]=[D]-\mu [C]-[N_\mu]\in V$, and since $P_\mu$ is orthogonal to all $C_i$, it must be $[P_\mu]=a[P]$ for some $a\in \bbR$.
	If $a\ne 0$, then $P_{\mu}^2=a^2P^2>0$ and $P_\mu$ would be big, contradicting the definition of $\mu$, so $a=0$ and $P_\mu=0$.
	Hence $\alpha(\mu)-\beta(\mu)=P_\mu\cdot C=0$, which means that $\Delta_Y(D)$ has a single rightmost vertex. 	
	
	On the other hand, if $[C]\notin V$ then $[P_\mu]=[D]-\mu [C]-[N_\mu]\notin V$, and in particular $[P_\mu]\ne0$.
	If moreover $[C]$ is ample, and so belongs to the \emph{interior} of the nef cone, then its intersection with every nonzero class on the (dual) pseudo-effective cone is positive.
	Therefore $\alpha(\mu)-\beta(\mu)=P_\mu\cdot C>0$, which means that $\Delta_Y(D)$ has two rightmost vertices.
\end{proof}

\begin{corollary} \label{one-rightmost}
	If the negative part $N_\mu$ of the Zariski decomposition $D_\mu=P_\mu+N_\mu$ has $\rho(S)-1$ irreducible components, then the polygon $\Delta_{\Y}(D)$ has exactly one rightmost vertex.
\end{corollary}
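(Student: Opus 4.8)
The plan is to read this off directly from Proposition \ref{rightmost} and Lemma \ref{lem-rightmost}, since the hypothesis on the number of components of $N_\mu$ forces the ambient Néron--Severi space to be spanned by $[D]$ and the components $C_i$.

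Concretely, let $C_1,\dots,C_n$ be the irreducible components of $N_\mu$, so by hypothesis $n=\rho(S)-1$. First I would apply Lemma \ref{lem-rightmost}, which asserts that the subspace $V=\langle[D],[C_1],\dots,[C_n]\rangle\subset NS(S)_\bbR$ has dimension exactly $n+1$. Plugging in $n=\rho(S)-1$ gives $\dim_\bbR V=\rho(S)=\dim_\bbR NS(S)_\bbR$, and therefore $V=NS(S)_\bbR$. In particular the class $[C]$ of the curve in the flag lies in $V$.

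With $[C]\in V$ established, the first case of Proposition \ref{rightmost} applies verbatim and yields that $\Delta_{\Y}(D)$ has exactly one rightmost vertex, which is the claim. (One should also note that $\Delta_{\Y}(D)$ always has \emph{at least} one rightmost vertex, as it is a nonempty polygon with leftmost abscissa $\nu<\mu$; so ``one rightmost vertex'' is the precise count.)

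There is no real obstacle here: the only point requiring care is that the dimension count in Lemma \ref{lem-rightmost} is an equality, i.e.\ that $[D],[C_1],\dots,[C_n]$ are genuinely independent — this is exactly what that lemma provides, via the Hodge index theorem (the big and nef class $P_0$ contributes a positive-square direction complementing the negative-definite span of the $C_i$). Once that is in hand, the corollary is immediate.
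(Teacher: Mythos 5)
Your argument is correct and is essentially identical to the paper's own proof: both apply Lemma \ref{lem-rightmost} with $n=\rho(S)-1$ to conclude $V=NS(S)_\bbR$, hence $[C]\in V$, and then invoke the first case of Proposition \ref{rightmost}. Nothing to add.
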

\begin{proof}
	By lemma \ref{lem-rightmost}, the subspace
	\[V=\langle[D],[C_1],\dots,[C_n]\rangle \subset NS(S)_\bbR\]
	has dimension equal to $\rho(S)-1+1$ and is therefore equal to the whole Néron--Severi space.
	The claim then follows from Proposition \ref{rightmost}.	
\end{proof}

\section{Counting vertices}
\label{sec:proofs}
Recall from the introduction that for an effective divisor $N=C_1+\dots+C_n$ with negative definite intersection matrix, $mc(N)$ denotes the largest number of irreducible components of a connected divisor contained in $N$,
\[mv(N)=\begin{cases}
n+mc(N)+4 &\text{ if }n<\rho(S)-1,\\
n+mc(N)+3 &\text{ if }n=\rho(S)-1,
\end{cases}
\]
and $mv(S)$ is the maximum of all $mv(N)$ for $N=C_1+\dots+C_n$ negative definite.

First we prove that $mv(S)$ is an upper bound for the number of vertices of every Newton--Okounkov body on $S$, and then we give a constructive proof that for every ample divisor class $D$, every number of vertices allowed by the bound is realized by some flag.

\begin{theorem}\label{thm:bound}
	On every smooth projective surface $S$, for every big divisor $D$ and every flag $\Y$, \(\#\operatorname{vertices}(\Delta_{\Y}(D))\le mv(S).\)
\end{theorem}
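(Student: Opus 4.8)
The plan is to bound separately the number of leftmost, rightmost, and interior vertices of $\Delta_{\Y}(D)$, and then to combine these bounds with a case analysis on $n$, the number of irreducible components of $N_\mu$. First I would observe that there are at most two leftmost vertices (the single point of the fiber $\{t=\nu\}$ could in principle degenerate, but in any case $\alpha(\nu)\le\beta(\nu)$ contributes at most $2$) and, by Proposition \ref{rightmost}, at most two rightmost vertices. For the interior vertices I would invoke Corollary \ref{bound-interior}: writing $N(p)$ for the connected component of $N_\mu$ through $p$ and $c$ for its number of components, the number of interior lower vertices is at most $c$ and the number of interior upper vertices is at most $n-c$. Since every interior vertex with a given first coordinate $t_i$ is either upper or lower (counting a first coordinate where both an upper and a lower interior vertex occur as two vertices), the total interior count is at most $n$. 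Here $c\le mc(N_\mu)$ by definition of $mc$, but more importantly both $c\le mc(N_\mu)$ and $n-c\le n-1$ unless $N(p)$ is empty; I would package this as: interior lower vertices $\le \min\{mc(N_\mu),n\}$ and interior upper vertices $\le n$.

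Next I would assemble the crude bound $\#\operatorname{vertices}(\Delta_{\Y}(D))\le 2+2+n+mc(N_\mu)$ — two leftmost, two rightmost, at most $n-c'$ interior upper and at most $c'$ interior lower where I refine $c'\le mc(N_\mu)$ — giving $n+mc(N_\mu)+4$, which matches $mv(N_\mu)$ in the case $n<\rho(S)-1$. This is the content of the first branch of the definition of $mv$. The remaining point is the case $n=\rho(S)-1$, where I must improve the bound by one to $n+mc(N_\mu)+3$: here Corollary \ref{one-rightmost} applies, since $N_\mu$ has exactly $\rho(S)-1$ components, so $\Delta_{\Y}(D)$ has exactly one rightmost vertex rather than two, shaving off the needed unit. (I should note that $n\le\rho(S)-1$ always, by Lemma \ref{lem-rightmost}: the span $V$ has dimension $n+1\le\rho(S)$, so these two cases are exhaustive.) Taking the maximum over all flags $\Y$ — equivalently over all negative definite configurations $N_\mu$ that can arise — yields $\#\operatorname{vertices}(\Delta_{\Y}(D))\le mv(S)$.

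The step I expect to require the most care is the bookkeeping of interior vertices against $mc(N_\mu)$. The subtlety is that Proposition \ref{inner} attaches an interior lower vertex at $t_i$ to a connected component of $N_{t_i+\varepsilon}$ through $p$ that contains one of the newly appearing curves $C_i,\dots,C_k$; this component lives in the Zariski chamber on the right of the wall $t=t_i$, not in $N_\mu$ itself, and a priori different $t_i$'s could be ``charged'' to overlapping or nested subconfigurations. One must argue that the connected component through $p$ grows monotonically (since $N_t$ is nondecreasing in $t$), so the full collection of curves ever lying in the $p$-component across all walls is itself a single connected subdivisor of $N_\mu$, of size at most $mc(N_\mu)$, and each interior lower vertex consumes at least one curve that first entered that component at that wall — giving the count $\le mc(N_\mu)$ rather than merely $\le c$. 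The analogous but easier statement for upper vertices (charged to components meeting $C$ away from $p$, of which there are at most $n$ curves total in $N_\mu$, hence at most $n-1$ once $N(p)$ is nonempty and at most $n$ in general) completes the estimate; reconciling the ``$n$'' versus ``$n-1$'' there with whether $N(p)=0$ is the only other place one must tread carefully, but it does not affect the final bound $n+mc(N_\mu)+4$ since $mc(N_\mu)\ge 1$ whenever $n\ge 1$.
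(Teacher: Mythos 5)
Your proposal is correct and follows essentially the same route as the paper: bound leftmost and rightmost vertices by $2$ each, bound interior lower vertices via Proposition \ref{inner}/Corollary \ref{bound-interior} by $mc(N_\mu)$ and interior upper vertices by $n$, and use Corollary \ref{one-rightmost} to drop one rightmost vertex when $n=\rho(S)-1$, then take the maximum over negative definite configurations. The extra bookkeeping you flag about charging lower vertices across walls is already packaged in Corollary \ref{bound-interior}, so no further argument is needed.
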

\begin{proof}
	We shall be more precise, showing that if  $\nu=\nu_C(D)$, $\mu=\mu_C(D)$ and $N=N_\mu=a_1C_1+\dots+a_n C_n$ is the negative part of the Zariski decomposition of $D_\mu=D-\mu C$, then the number of vertices is bounded by $mv(N)$.
	By Proposition \ref{inner}, the number of upper iterior vertices is bounded by $n$, and the number of lower interior vertices is bounded by  $mc(N)$; the number of leftmost and rightmost vertices is always at most 2, but if $n=\rho(S)-1$, then by Corollary \ref{one-rightmost} there is exactly 1 rightmost vertex, and the bound follows.
\end{proof}

\begin{corollary}
	Let $S$ be a smooth projective algebraic surface, $D$ a big divisor and $Y=\{C, p\}$ an admissible flag on $S$.
	The polygon $\Delta_{\Y}(D)$ has at most $2\,\rho(S)+1$ vertices.
\end{corollary}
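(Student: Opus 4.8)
The plan is to deduce this directly from Theorem \ref{thm:bound}, which already asserts that $\#\operatorname{vertices}(\Delta_{\Y}(D))\le mv(S)$ for every big $D$ and every admissible flag. So it suffices to establish the purely numerical inequality $mv(S)\le 2\rho(S)+1$, i.e. that $mv(N)\le 2\rho(S)+1$ for every effective divisor $N=C_1+\dots+C_n$ with negative definite intersection matrix.

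Write $\rho=\rho(S)$. First I would record two elementary bounds. The classes $[C_1],\dots,[C_n]$ are linearly independent (their intersection matrix is nonsingular) and span a negative definite subspace of $NS(S)_\bbR$; since the Hodge index theorem gives the intersection form signature $(1,\rho-1)$, any negative definite subspace has dimension at most $\rho-1$, so $n\le \rho-1$. Secondly, $mc(N)\le n$ trivially, as a connected subdivisor of $N$ has at most $n$ irreducible components.

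Then I would split into the two cases in the definition of $mv(N)$. If $n=\rho-1$, then $mv(N)=n+mc(N)+3\le 2(\rho-1)+3=2\rho+1$. If $n<\rho-1$, i.e. $n\le \rho-2$, then $mv(N)=n+mc(N)+4\le 2(\rho-2)+4=2\rho\le 2\rho+1$. In either case $mv(N)\le 2\rho+1$; taking the maximum over all negative definite $N$ yields $mv(S)\le 2\rho+1$, and combining with Theorem \ref{thm:bound} gives the corollary.

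There is no genuine obstacle here: the statement is a formal consequence of the main bound together with the Hodge index estimate $n\le\rho-1$ and the trivial estimate $mc(N)\le n$. The only point worth a moment's care is the extreme case $\rho=1$ (where necessarily $N=0$, $n=mc(N)=0$): the formula then gives $mv(S)=3=2\rho+1$, consistent with the well-known fact that every Newton--Okounkov polygon on a surface of Picard number one is a triangle. Note also that equality $mv(S)=2\rho+1$ can only occur when some negative definite $N$ is itself connected with exactly $\rho-1$ components, which is the configuration exploited in the sharpness statement of Theorem \ref{main}.
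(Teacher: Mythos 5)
Your proposal is correct and follows exactly the route the paper intends: the corollary is Theorem \ref{thm:bound} combined with the Hodge index bound $n\le\rho(S)-1$ (hence $mc(N)\le n$ and $mv(N)\le 2\rho(S)+1$ in both cases of the definition), which is precisely the remark made in the introduction. Nothing to add.
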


For the construction of flags leading to bodies with the desired number of vertices we shall need the following lemma, which may be of independent interest.

\begin{lemma}\label{ordered-negative}
	Let $N=C_1+\dots+C_k$ be an effective divisor with negative definite intersection matrix (admitting $k=0$ in which case $N=0$), and $A$ an ample divisor. There is an irreducible curve $C$ whose class is ample, such that for every $t$ with $A-tC$ pseudo-effective, the negative part of its Zariski decomposition is supported on $N$, and moreover, for	every $i=1,\dots, k$,
	\begin{enumerate}
		\item $C$ intersects $C_i$ in at least two points,
		\item denoting $N_t$ the negative part of the Zariski decomposition of $A-tC$,
		$\sup\{t\in \bbQ \,|\, C_i \text{ is not contained in }N_t\}$ is a finite positive real number $t_i$, and
		\item $t_1< \dots < t_k$.
	\end{enumerate}
	Moreover, the numerical class of the curve $C$ can be taken in the span $\langle A, C_1, \dots, C_k\rangle$.
\end{lemma}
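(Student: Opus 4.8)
The plan is to construct $C$ as a suitable very ample curve obtained by perturbing a rational class along a carefully chosen direction in $NS(S)_\bbR$, so that the ray $\{D - tC\}$ crosses the Zariski chambers cut out by the faces of the cone $\sigma = \langle C_1,\dots,C_k\rangle$ in the prescribed order. First I would set up the chamber picture: since the intersection matrix of $N = C_1 + \dots + C_k$ is negative definite, for each subset $I \subset \{1,\dots,k\}$ the system $(D' - \sum_{i\in I} b_i C_i)\cdot C_j = 0$, $j \in I$, has a unique solution depending linearly on $D'$, and the locus where $D - tC$ has negative part supported exactly on $\{C_i : i \in I\}$ is governed by the signs of these $b_i$ and of the intersections $(D-tC-\sum b_i C_i)\cdot C_j$ for $j \notin I$. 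The key point — and here I would invoke the spirit of Lemma \ref{relative-bauer} and the Küronya--Lozovanu--Maclean description recalled in Section \ref{sec:lm-klm} — is that along any ray from an ample class, the components of $N_t$ only get added, never removed, so the order of appearance $t_1 \le \dots \le t_k$ is well-defined once we fix the direction $C$; what we must arrange is that all these inequalities are \emph{strict} and that $C$ meets each $C_i$ in at least two points.

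Next I would handle the "generic direction" part. Consider first $C$ a generic member of a sufficiently positive linear system (e.g. a general element of $|mH|$ for $H$ very ample and $m \gg 0$): genericity gives irreducibility and smoothness of $C$, and $C \cdot C_i \ge 2$ for all $i$ (taking $m$ large), giving (1). For (2), finiteness and positivity of $t_i$ follow because $D$ is ample hence $D - tC$ is eventually not pseudo-effective, and for small $t$ the class $D - tC$ is still ample so its negative part is empty — so each $C_i$ enters at some finite positive time $t_i$, and the requirement that the negative part stays supported on $N$ is exactly the statement that no curve outside $\{C_1,\dots,C_k\}$ is ever in the negative part, which again holds for generic $C$ in a sufficiently positive system (a curve $E \notin \{C_i\}$ entering the negative part would impose $(D-tC)\cdot E < 0$ and $E$ negative, a codimension condition avoided by moving $C$). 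The genuinely delicate point (3) — making the $t_i$ pairwise distinct — is where I expect the main obstacle. The times $t_i$ are determined by when the sign conditions flip, and for a generic $C$ two components could a priori enter simultaneously (indeed if $C$ is itself generic and symmetric with respect to two of the $C_i$); I would resolve this by a further small perturbation of $[C]$ inside the rational points of the ample cone, arguing that the set of directions for which $t_i = t_{i+1}$ for some $i$ is contained in a finite union of proper linear (or at worst algebraic) subvarieties of $NS(S)_\bbR$ — because $t_i = t_{i+1}$ forces a linear relation among $[D]$, $[C]$, and the $[C_j]$ coming from two faces of $\sigma$ being hit at the same parameter — so a very general ample rational class avoids all of them. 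One then takes $C$ very ample in the corresponding (integral) class, redoing the genericity arguments for (1) and for "negative part supported on $N$" within that fixed class, which is again a general-member statement.

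Finally, for the last sentence: when $k < \rho(S) - 1$, the span $\langle [D], [C_1], \dots, [C_k]\rangle$ has dimension at most $k+1 < \rho(S)$, hence is a proper subspace of $NS(S)_\bbR$; since the ample cone is open and full-dimensional, it is not contained in this proper subspace, so we may choose the rational ample class (and hence $[C]$) outside it while still respecting all the finitely many genericity/open conditions used above. Thus the main work is the perturbation argument in step (3); everything else is a matter of combining openness of the ample cone, Bertini-type genericity, and the monotonicity of $N_t$ furnished by the results already recorded in Sections \ref{sec:lm-klm}--\ref{sec:interior}.
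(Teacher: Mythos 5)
Your strategy---pick a general very ample $C$ and then perturb its class to separate the times $t_i$---has a genuine gap at its foundation: the conditions that the negative part of $D-tC$ be supported on $N$ for \emph{all} pseudo-effective $D-tC$, and that each $C_i$ actually become a component of some $N_t$, are not generic conditions on the ample class $[C]$, and your codimension-count argument for them fails. For a general ample direction the ray $\{D-tC\}$ exits the big cone through an essentially arbitrary face of the pseudo-effective cone, and along the way it typically crosses Zariski chambers whose negative supports involve negative curves other than $C_1,\dots,C_k$: the condition ``$(D-tC)\cdot E<0$ for some $t$'' is an open condition on $[C]$, not one of positive codimension, so it cannot be avoided by moving $C$ generally in a very positive linear system. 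Conversely, nothing in your argument forces a given $C_i$ to enter the negative part at all before $D-tC$ stops being pseudo-effective; for instance if $[C]$ is proportional to $[D]$ the negative part can remain $0$ for all $t\le\mu$, so ``$D-tC$ eventually leaves the pseudo-effective cone'' does not yield finiteness of $t_i$. Both properties require the ray to be \emph{aimed} at the specific face determined by $N$, not handled by Bertini-type genericity.

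This aiming is exactly what the paper's proof supplies and what is missing from your proposal: it constructs, by induction on $k$, positive rationals $a_1,\dots,a_k$ with $A=D-a_1C_1-\dots-a_kC_k$ ample and takes $C\in|mA|$ irreducible. Then $D-(1/m)C=\sum a_iC_i$, so along the (rescaled) ray one has $D-tA=(1-t)D+t\sum a_iC_i$ with $(1-t)D$ nef, whence the negative part is always bounded by $t\sum a_iC_i$ and no curve outside $N$ ever appears, while every $C_i$ appears by the end. The strict ordering $t_1<\dots<t_k$ is obtained not by a generic-position perturbation but by choosing $a_k$ sufficiently small: at interleaving rational parameters $s_i$ the components of the negative part are exactly $C_1,\dots,C_i$, and continuity of the Zariski decomposition (\cite[Proposition 1.14]{BKS04}) preserves this after subtracting $a_kC_k$, forcing $C_k$ to enter strictly last. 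Your treatment of the final independence claim inherits the same gap: one cannot simply choose $[C]$ outside $\langle D,C_1,\dots,C_k\rangle$ from the start, since the construction forces $[C]$ into that span; the paper instead perturbs $A$ by a small multiple of an independent irreducible curve $B$, again invoking continuity of the Zariski decomposition to keep the chamber crossings unchanged.
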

Note that the second condition simply means that the support of $N_t$ is exactly $N$ for $t$ large enough.
Observe that the lemma still holds when $N=0$, as the claims in that case are empty.

\begin{proof}

	We will prove by induction on $k$ that there are positive rational numbers $a_1, \dots, a_k$ such that $B=A-a_1C_1-\dots-a_kC_k$ is ample, and that every irreducible curve $C\in|mB|$ satisfies the last two desired properties.
	Since for every ample class $B$ there is a multiple $mB$ and an irreducible curve $C\in|mB|$ that intersects each $C_i$ in at least two points, we shall be done.
	
	If $k=1$, choose a positive integer $a$ such that the divisor class $B=A-(1/a)C_1$ is ample. Then for every $C\in |mB|$, $A_{1/m}=A-(1/m)C=(m/a)C_1=N_{1/m}$, so that
	$$0<\sup\{t\in \bbQ \,|\, C_1 \text{ is not contained in }N_t\}<1/m,$$
	and we are done.
	
	Now assume the claim is true for the divisor $C_1+\dots+C_{k-1}$, and let $a_1,\dots,a_{k-1}$ be positive rational numbers such that  $B'=A-a_1C_1-\dots-a_{k-1}C_{k-1}$ is ample and satisfies the two conditions
	\begin{enumerate}
		\item denoting $N_t'$ the negative part of the Zariski decomposition of $A-tB'$,
		$\sup\{t\in \bbQ \,|\, C_i \text{ is not contained in }N_t'\}$ is a finite positive real number $t_i'$,  and
		\item $t_1'< \dots < t_{k-1}'$.
	\end{enumerate}
	Of course this implies the two analogous conditions for $A-tC$ for every $C\in |mB'|$.
	Note that for every $t\in[0,1/m]$, since $A_t=(1-mt)A+mt(a_1C_1+\dots+a_{k-1}C_{k-1})$, with $(1-mt)A$ nef and $a_1C_1+\dots+a_{k-1}C_{k-1}$ effective, by the extremality properties of the Zariski decomposition it follows that $N_t\le tm(a_1C_1+\dots+a_{k-1}C_{k-1})$ (with equality if and only if $t=1/m$). In particular, all components of $N_t$ are among the $C_i$.
	
	Choose rational numbers $s_i$ with $0=s_0<t_1'<s_1<t_2'<\dots<s_{k-2}<t_{k-1}'<s_{k-1}<1$.
	The choices made guarantee that the irreducible components of $N'_{s_i}$ are exactly $C_1, \dots, C_i$, and, since for every $i<j\le k$ we have $N'_{t_j}\ge N_{s_i}$ and $P'_{t_j}\cdot C_j=0$, it follows that 
	 $P'_{s_i}\cdot C_j\ge (t'_j-s_i)B'\cdot C_j>0$ for all $i<j\le k$.
	 Therefore, by continuity of the Zariski decomposition (see \cite[Proposition 1.14]{BKS04}, there exist $\varepsilon_i>0$ such that for all $0<a_k\le \varepsilon_i$, the irreducible components of the negative part in the Zariski decomposition of $A-s_i(B'-a_kC_k)$ are also exactly $C_1, \dots, C_i$.
	Thus it suffices to choose a rational $a_k$ smaller than $\varepsilon_0, \dots, \varepsilon_{k-1}$ and set $B=A-a_1C_1-\dots-a_kC_k$, because clearly $N_1=a_1C_1+\dots+a_kC_k$ and therefore
	$$t_{k-1}<s_{k-1}<\sup\{t\in \bbQ \,|\, C_k \text{ is not contained in }N_t\}<1,$$
	completing the induction step.
	
	The class $B$ is by construction a combination of $A$ and the $C_i$, so the class of $C$ belongs to $\langle A, C_1, \dots, C_k\rangle$ as claimed.
\end{proof}

\begin{lemma}\label{ordered-negative-maximal}
	Let $N=C_1+\dots+C_k$ be a maximal effective divisor with negative definite intersection matrix, i.e., such that there exists no curve $C'$ distinct from $C_1, \dots, C_k$ 
	with $N+C'$ having negative definite intersection matrix, and let $A$ be an ample divisor. If $k<\rho(S)-1$, there is an irreducible curve $C$ satisfying all properties of Lemma \ref{ordered-negative} but whose numerical class is linearly independent of $\langle A, C_1, \dots, C_k\rangle$.
\end{lemma}
\begin{proof}
	Consider the class $B$ from the proof of Lemma \ref{ordered-negative}; we can slightly modify $B$ to obtain a $B''$ which still satisfies the properties and whose numerical class is independent, as follows.
	Assuming $t_i=\sup\{t\in \bbQ \,|\, C_k \text{ is not contained in }N_t\}$ for $i=1,\dots, k$ as above, choose rational numbers $s_i \in (t_i,t_{i+1})$ and $s_k\in (t_k,1)$.
	Let $Z$ be an irreducible curve whose numerical class is independent of those of $C$ and the $C_i$.
	Then $P_{s_i}\ge (1-s_i)A$, so $P_{s_i}\cdot Z>0$, and by continuity of the Zariski decomposition, there exist $\varepsilon_i>0$ such that for all $|b|\le \varepsilon_i$ the Zariski decomposition of $A-s_i(B+bZ)$ has exactly the components $C_1, \dots, C_i$ in its negative part. 
	The fact that $N$ is maximal guarantees that for no $t>s_k$ any other negative curve appears in $N_t$.
	Then the desired class is $B''=B+bZ$ for some $|b|\le \varepsilon_i$ for every $i$.	
\end{proof}
\begin{theorem}
	On every smooth projective surface $S$, for every ample divisor $A$ and every integer $v$, $3\le v\le mv(S)$, there exists a flag $\Y$ such that 	\(\#\operatorname{vertices}(\Delta_{\Y}(A))=v.\)
\end{theorem}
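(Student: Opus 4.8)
The plan is to realize any prescribed number of vertices $v$ with $3 \le v \le mv(S)$ by choosing an appropriate negative definite divisor $N = C_1 + \dots + C_n$ and an appropriate flag $\Y : S \supset C \supset \{p\}$, then computing the vertex count via Propositions \ref{inner} and \ref{rightmost}. First I would fix a negative definite configuration $N = C_1 + \dots + C_n$ that achieves (or comes close to achieving) $mv(N)$ relative to the target $v$; concretely, I would pick $N$ so that it contains a connected subdivisor with $mc(N)$ components and so that the total number of components and the value of $mc(N)$ give us enough ``room'' to hit $v$. The key point is that by Lemma \ref{ordered-negative}, applied to $D$ and the divisor $C_1 + \dots + C_n$ (in a suitably chosen order of appearance), there is an irreducible curve $C$ with ample class meeting each $C_i$ in at least two points, such that the negative part of the Zariski decomposition of $D - tC$ is supported on $N$ for all pseudo-effective $D - tC$, with the components $C_i$ entering $N_t$ one at a time at strictly increasing parameters $t_1 < \dots < t_n < \mu$.

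The heart of the construction is to arrange \emph{where} the point $p$ on $C$ sits and \emph{how} the curves $C_i$ attach to one another, so that the connected components of $N_{t_i + \varepsilon}$ through $p$ (respectively not through $p$) are controlled. By Proposition \ref{inner}, each index $i$ contributes an interior lower vertex precisely when, for small $\varepsilon > 0$, the connected component of $N_{t_i+\varepsilon}$ through $p$ contains one of the newly entered components, and contributes an interior upper vertex precisely when some connected component of $N_{t_i+\varepsilon}$ meeting $C$ away from $p$ does. Since each $C_i$ enters singly and $C$ meets each $C_i$ in $\ge 2$ points, every $C_i$ away from the component of $p$ will generically also meet $C$ elsewhere, so each such index contributes an upper interior vertex; this gives $n$ upper interior vertices if $p$ lies off $N$, or fewer if $p$ lies on a connected sub-configuration, whose components then switch to contributing lower interior vertices. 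By choosing $p$ to lie on the curve that enters a connected subdivisor of maximal size $mc(N)$, I can make exactly $mc(N)$ of the indices contribute lower interior vertices and the remaining $n - mc(N)$ contribute upper interior vertices, for a running total of $n + mc(N)$ interior vertices. Adding $2$ leftmost vertices and then $1$ or $2$ rightmost vertices according to Proposition \ref{rightmost} (one if $[C] \in \langle [D], [C_1], \dots, [C_n]\rangle$, two otherwise — and the last clause of Lemma \ref{ordered-negative} lets us force $[C]$ independent when $n < \rho(S) - 1$, giving $2$ rightmost vertices, matching the ``$+4$'' in $mv(N)$) recovers exactly $mv(N)$. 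Finally, to hit intermediate values $v < mv(S)$ I would either pass to a sub-configuration of $N$ with fewer components and/or smaller $mc$, or place $p$ on a larger connected piece so as to trade upper vertices for lower ones while decreasing the total — a short parity/monotonicity bookkeeping argument shows every $v$ in $[3, mv(S)]$ is attained, the minimum $v = 3$ corresponding to a triangle (take $N = 0$, so the body is bounded by $t = \nu$, $t = \mu$, $\alpha$, $\beta$ with $\alpha \equiv 0$ and no interior breaks, and $[C] \in \langle [D]\rangle$ forcing one rightmost vertex).

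The main obstacle I anticipate is the careful combinatorial argument that every value $v$ in the range is realized, not just the endpoint $mv(S)$: one must exhibit, for each $v$, a negative definite $N$ and a choice of $p$ giving exactly $v$ vertices, and verify that decrementing the vertex count by one is always possible. The cleanest way is probably to start from an $N$ realizing $mv(S)$ and show that removing one component (or moving $p$ by one step along a maximal connected chain) decreases the count by exactly one, so that by a descending induction one sweeps out the full interval down to $3$; the subtlety is that removing a component can change $mc(N)$ and the rightmost-vertex count simultaneously, so the bookkeeping must be done with some care, distinguishing the cases $n < \rho(S) - 1$ and $n = \rho(S) - 1$ exactly as in the definition of $mv(N)$.
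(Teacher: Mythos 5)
Your overall strategy coincides with the paper's: apply Lemma \ref{ordered-negative} to separate the entry times $t_1<\dots<t_n$, read off interior vertices from the position of $p$ via Proposition \ref{inner}, and settle the rightmost count via Proposition \ref{rightmost} together with the independence clause of Lemma \ref{ordered-negative}. However, your interior count is internally inconsistent: you say the $mc(N)$ indices whose components contain $p$ ``switch'' to contributing lower vertices while the remaining $n-mc(N)$ contribute upper ones, which totals $n$, not the $n+mc(N)$ you claim and need. The correct statement (the one the paper uses) is that, because $C$ meets every $C_i$ in at least two points, the connected component of $N_{t_i+\varepsilon}$ through $p$ also meets $C$ away from $p$, so each of the first $mc(N)$ entry times contributes \emph{both} an upper and a lower interior vertex, while the remaining $n-mc(N)$ contribute only an upper one.

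More seriously, the interpolation step --- producing every $v$ with $3\le v\le mv(S)$ rather than just the endpoint --- is exactly what you defer as ``bookkeeping,'' and the mechanism you propose (removing one component decreases the vertex count by exactly one) is false. Removing a component can change $mv$ by $0$ (when $n$ drops from $\rho(S)-1$ to $\rho(S)-2$ the rightmost contribution rises from $1$ to $2$, compensating the loss) or by $2$ (when the removed component lies in the maximal connected piece through $p$, so both $n$ and $mc$ drop, killing an upper and a lower vertex simultaneously), so a descending induction in steps of one does not go through as stated. The paper resolves this by ordering the components so that $C_1+\dots+C_i$ is connected for $i\le mc(N_{mv})$, checking that the values $mv(C_1+\dots+C_j)$ of the initial segments together with the values $mv(C_1+\dots+C_j)-1$ cover all of $\{3,\dots,mv(S)\}$, and then realizing, for each fixed negative definite $N$, \emph{both} $mv(N)$ (taking $p\in C\cap C_1$) and $mv(N)-1$ (taking $p\in C\cap C_2$, or $p$ off $N$ when $mc(N)=1$). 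It is this ``$mv(N)$ and $mv(N)-1$ for every $N$'' dichotomy, absent from your sketch, that absorbs the jumps by $2$; without it your plan has a genuine gap.
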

\begin{proof}
	Choose an effective divisor $N_{mv}=C_1+\dots+C_k$ with negative definite intersection matrix, such that $mv(S)=mv(N_{mv})$, and assume that its components have been ordered in such a way that for every $1\le i \le mc(N_{mv})$, the divisor $C_1+\dots+C_i$ is connected.
	By the definition of $mv(N)$, it is not restrictive to assume that $N_{mv}$ is maximal, i.e., there exists no curve $C'$ with $N_{mv}+C'$ having negative definite intersection matrix.
	
	If $k<\rho(S)-1$, for every $i\le j\le k$, $mv(C_1+\dots+C_j)=mv(N_{mv})-k+j$, and for $0\le j \le i$, $mv(C_1+\dots+C_j)=mv(N_{mv})-k-i+2j$.
	On the other hand, if $k=\rho(S)-1$, for every $i\le j< k$, $mv(C_1+\dots+C_j)=mv(N_{mv})-k+j+1$, and for $0\le j \le i$, $mv(C_1+\dots+C_j)=mv(N_{mv})-k-i+2j+1$.
	In any event,
	\[\{3,\dots,mv(S)\} \subset \bigcup_{N\le N_{mv}}\{mv(N)-1, mv(N)-2\} \cup\{mv(N_{mv})\}. \]
	Therefore, it will be enough to prove that, for every $N$ with negative definite intersection matrix:
	\begin{itemize}
		\item If $N$ is maximal, there is a flag $\Y$ such that $\Delta_{\Y}(A)$ has $mv(N)$ vertices.
		\item If $N$ is nonzero or has less than $\rho(s)-1$ components, there is a flag $\Y$ such that $\Delta_{\Y}(A)$ has $mv(N)-1$ vertices.
		\item If $N$ is nonzero and has less than $\rho(s)-1$ components, there is a flag $\Y$ such that $\Delta_{\Y}(A)$ has $mv(N)-2$ vertices.	
	\end{itemize}

	In the case of a maximal $N$ with less than $\rho(S)-1$ components, choose an irreducible curve $C$ satisfying the conditions of Lemma \ref{ordered-negative-maximal}, and let $p$ be one of the intersection points of $C$ and $C_1$ (unless $N=0$ in which case we choose an arbitrary $p\in C$).
	We claim that $A$, $\Y:S\supset C \supset\{p\}$ give a body with $mv(N)$ vertices.
	On the one hand, since $A$ is ample, $P_0=A$ and $P_0\cdot C>0$, so $\nu=0$ and $\Delta_{\Y}(A)$ has two leftmost vertices.
	Moreover, Proposition \ref{inner} ensures that $\Delta_{\Y}(A)$ has two interior vertices with first coordinate equal to the number $t_i$ given by Lemma \ref{ordered-negative-maximal} for $i=1,\dots,mc(N)$, whereas it only has an upper interior vertex for $mc(N)<i\le k$.
	Finally, as the numerical class of $C$ is independent of those of $A, C_1, \dots, C_k$, by Proposition \ref{rightmost} $\Delta_{\Y}(A)$ has two rightmost vertices.
	So, the total number of vertices is $mv(N)$.
	
	Now choose $C$ verifying the condtions of Lemma \ref{ordered-negative}, so that the class of $C$ belongs to the span $\langle A, C_1, \dots, C_k\rangle$. The shape of $\Delta_{\Y}(A)$ is as before, but with a single rightmost vertex; if $N$ has $\rho(S)-1$ components (in particular $N$ is maximal) the total number of vertices is $mv(N)$, otherwise it is $mv(N)-1$.
	
	Finally, if $N$ is nonzero we can pick $p$ differently, while keeping the same curve $C$ that satisfies the condtions of Lemma \ref{ordered-negative}. If $mc(N)=1$ we let $p$ be a point of $C$ not on $N$, and if $mc(N)>1$ then we take $p$ to be one of the intersection points of $C$ with $C_2$. In this way we obtain one lower point less, so if $N$ has $\rho(S)-1$ components the total number of vertices is $mv(N)-1$, otherwise it is $mv(N)-2$.
\end{proof}

\begin{corollary}
	Given integers $v,\rho$ with $3\le v \le 2\,\rho +1$, there exist a smooth projective algebraic surface $S$ with Picard number $\rho(S)=\rho$, a big divisor $D$ and an admissible flag $Y=\{C, p\}$ on $S$, such that $\Delta_{\Y}(D)$ has $v$ vertices.
\end{corollary}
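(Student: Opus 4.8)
The plan is to reduce the claim to the realisation part of Theorem~\ref{main}: it suffices to produce, for each $\rho\ge 1$, a smooth projective surface $S$ with $\rho(S)=\rho$ and $mv(S)=2\rho+1$, because then any ample (hence big) divisor $D$ on $S$ will do, Theorem~\ref{main} guaranteeing a flag $\Y$ with $\#\operatorname{vertices}(\Delta_{\Y}(D))=v$ for every integer $v$ in the range $3\le v\le mv(S)=2\rho+1$. As recorded after Theorem~\ref{main}, the Hodge index theorem already gives $mv(S)\le 2\rho(S)+1$ on every surface; and to obtain equality it is enough to exhibit a \emph{connected} effective divisor $N=C_1+\dots+C_{\rho-1}$ with negative definite intersection matrix. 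For such an $N$ one has $k=\rho-1=\rho(S)-1$, while $mc(N)=k$ because $N$ itself is a connected subdivisor with all $k$ components; hence $mv(N)=k+mc(N)+3=2\rho+1$, forcing $mv(S)=2\rho+1$.

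It remains to build $S$ together with its curve configuration. For $\rho=1$ take $S=\bbP^2$, where the only negative definite effective divisor is $N=0$ and $mv(0)=0+0+3=3=2\rho+1$. For $\rho\ge 2$ I would take $S$ to be an iterated blow-up of $\bbP^2$ at infinitely near points: blow up a point of $\bbP^2$ to obtain the exceptional curve $E_1$, then blow up a point of $E_1$, then a point of the new exceptional curve $E_2$ lying off the strict transform of $E_1$, and so on, performing $\rho-1$ blow-ups in all, each time choosing the centre on the last exceptional curve but off the curves produced so far. Then $\rho(S)=\rho$, and on $S$ the strict transforms of the first $\rho-2$ exceptional curves, together with the last exceptional curve $E_{\rho-1}$, form a connected chain of smooth rational curves with self-intersections $(-2,-2,\dots,-2,-1)$ and consecutive intersection numbers $1$. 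Its intersection matrix is tridiagonal with diagonal $(-2,\dots,-2,-1)$ and $1$'s off the diagonal; its leading $j\times j$ principal minor equals $(-1)^j(j+1)$ for $j<\rho-1$ and $(-1)^{\rho-1}$ for $j=\rho-1$, so it is negative definite. This $N$ is the configuration we need, hence $mv(S)=2\rho+1$ and the corollary follows.

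The only step that is genuinely geometric, rather than bookkeeping, is the verification in the second paragraph that the blow-up centres can be chosen so that after all $\rho-1$ blow-ups the strict transforms of the exceptional curves are smooth, irreducible, of the stated self-intersection, and pairwise meet exactly in the chain pattern; this is a routine induction on the number of blow-ups, at each stage simply avoiding the finitely many intersection points among the curves already present. The remaining ingredients --- the negative-definiteness computation, the arithmetic of $mv(N)$, and the passage to an ample $D$ --- are immediate. Alternatively, one could start from any surface already known to realise a Newton--Okounkov polygon with $2\rho+1$ vertices, such as the one in the Corollary following Theorem~\ref{main}; by Theorem~\ref{thm:bound} such a surface has $mv(S)=2\rho+1$, and one then argues exactly as above using any ample class on it.
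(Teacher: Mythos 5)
Your proposal is correct and follows essentially the same route as the paper: build $S$ by blowing up $\rho-1$ infinitely near points of a Picard-number-one surface so that the exceptional locus is a connected, negative definite divisor with $\rho-1$ components, conclude $mv(S)=2\rho+1$, and then invoke the realization part of Theorem \ref{main} for an ample divisor. The extra details you supply (choosing centres off the earlier curves to get a $(-2,\dots,-2,-1)$ chain and checking the principal minors) are fine but not needed, since the exceptional divisor of any such composition of blowups is automatically negative definite and connected, which is all the paper uses.
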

\begin{proof}
	For $\rho=1$ there is nothing to prove, since Theorem \ref{thm:bound} shows that every big divisor and every admissible flag on a surface with Picard number 1 give rise to a triangular Newton--Okounkov body.
	
	So assume $\rho\ge 2$ and pick a surface $S_0$ with Picard number 1.
	Construct $S$ by successively blowing up points $p_1,\dots,p_{\rho-1}$ where $p_1\in S$ and for $i>1$, $p_i$ is a point on the exceptional divisor of the previous blowup.
	Then the exceptional divisor of the composition $S\rightarrow S_0$ is a connected divisor $N$ with $\rho-1$ components and negative definite intersection matrix, and hence $mv(S)=2\rho+1$.
\end{proof}
Note that this construction can be made starting from $S_0=\bbP^2$ and selecting each $p_i$ to lie in the strict transform of a fixed line; in that case the resulting surface $S$ is toric (and the Newton--Okounkov polygons obtained by toric flags have $\rho+2$ vertices, well short of the $2\rho+1$ vertices that are attainable with our construction).

\section{Slopes and lengths of sides}\label{sec:slopes-lengths}
Continue with the notation from previous sections, namely $D$ is a big divisor, $C$ is an irreducible curve, $p$ is a point on $C$, $\nu=\nu_C(D)$, $D_t=D-tC$,  $\mu=\max\{t\,|\,D_t \text{ pseudo-effective}\}$, $D_t=P_t+N_t$ is the Zariski decomposition for $0\le t\le \mu$, $C_1, \dots, C_n$ are the irreducible components in order of appearance, $t_i=\inf\{t\in[\nu,\mu]\,|\,C_i \text{ in the support of }N_t\}$, and $N_t=\sum a_i(t) C_i$.

\begin{proposition}\label{pro:slopes}
	The slopes of the sides of $\Delta_{\Y}(D)$ are determined by the intersection numbers $C\cdot C_j$ and $C_i\cdot C_j$ and the local intersection numbers $(C\cdot C_j)_p$.
\end{proposition}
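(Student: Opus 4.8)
The plan is to read the slopes directly off the boundary functions $\alpha$ and $\beta$ described in Section \ref{sec:lm-klm}. Recall that $\Delta_{\Y}(D)$ is cut out by $\nu \le t \le \mu$ and $\alpha(t) \le s \le \beta(t)$, that $\alpha$ and $\beta$ are continuous and piecewise affine, and that all their breakpoints lie among $\nu = t_0 \le t_1 \le \dots \le t_{n+1} = \mu$. Hence each lower side of $\Delta_{\Y}(D)$ sits above some (nondegenerate) interval $(t_{i-1}, t_i)$ and has slope $\alpha'$ there, and each upper side has slope $\beta'$ there; so it is enough to show that each of these finitely many candidate slopes is a function of the intersection numbers listed in the statement.

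First I would record the slopes of the coefficient functions $a_j(t)$. On $(t_{i-1}, t_i)$ the support of $N_t$ is $\{C_1, \dots, C_{i-1}\}$, and writing $a_j(t) = a_{j0} + a_{j1}t$ and comparing the coefficients of $t$ in the defining system \eqref{eq:linear-equations}, one finds that $(a_{j1})_{1 \le j < i}$ is the unique solution of
\[
\sum_{1 \le j' < i} a_{j'1}\,(C_{j'} \cdot C_j) \;=\; -\,C \cdot C_j, \qquad 1 \le j < i,
\]
the matrix $(C_{j'} \cdot C_j)_{1 \le j', j < i}$ being negative definite, hence nonsingular. Thus every $a_{j1}$ is a rational function of the numbers $C_{j'} \cdot C_j$ and $C \cdot C_j$ only — in particular it does not involve $D$.

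The lower slope then follows from $\alpha(t) = (N_t \cdot C)_p = \sum_j a_j(t)\,(C_j \cdot C)_p$, which gives $\alpha'|_{(t_{i-1}, t_i)} = \sum_{1 \le j < i} a_{j1}\,(C_j \cdot C)_p$, a function of the $C_{j'} \cdot C_j$, the $C \cdot C_j$ and the local numbers $(C_j \cdot C)_p$. For the upper slope I would use $\beta(t) = \alpha(t) + P_t \cdot C = \alpha(t) + D \cdot C - t\,C^2 - \sum_j a_j(t)\,(C_j \cdot C)$ (equivalently, Remark \ref{beta-intpts}); differentiating annihilates the constant $D \cdot C$ and leaves $\beta'|_{(t_{i-1}, t_i)} = \alpha'|_{(t_{i-1}, t_i)} - C^2 - \sum_{1 \le j < i} a_{j1}\,(C_j \cdot C)$, again determined by the same data once we include $C^2 = C \cdot C$, which is part of the intersection matrix of the configuration $C, C_1, \dots, C_n$. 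Every side slope of $\Delta_{\Y}(D)$ is then one of the finitely many values produced this way.

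The subtle point — and what I would treat most carefully — is the precise meaning of ``determined''. The slope values above are manufactured from the intersection matrix of the configuration and the local numbers $(C_j \cdot C)_p$, and never see $D$; but deciding which of them is actually attained (which intervals $(t_{i-1}, t_i)$ are nondegenerate, and which breakpoints $t_i$ are genuine vertices) uses the combinatorics of the configuration — connectedness of subdivisors and which $C_j$ pass through $p$ — via Proposition \ref{inner}, together with the order of appearance of the $C_i$, which is governed by the numbers $D \cdot C_j$. I would therefore phrase the conclusion as in the Proposition of the introduction: the intersection matrix of $C, C_1, \dots, C_n$ and the local intersection numbers $(C_j \cdot C)_p$ determine the finite set of all possible slopes of sides of $\Delta_{\Y}(D)$, and the slope of every actual side lies in it. The remaining work is then essentially bookkeeping: listing the negative-definite subconfigurations that can occur as the support of some $N_t$ and checking that each produces a linear system of the displayed form.
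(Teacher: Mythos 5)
Your argument is essentially the paper's own proof: you extract the coefficients $a_{j1}$ from the $t$-coefficients of the Zariski system \eqref{eq:linear-equations}, using nonsingularity of the negative definite intersection matrix, and then read the lower and upper slopes off $\alpha(t)=(N_t\cdot C)_p$ and $\beta(t)=\alpha(t)+P_t\cdot C$ exactly as the paper does. You are in fact slightly more careful than the paper's displayed computation of $\beta$, which omits the $-tC^2$ contribution of $P_t\cdot C$, so the extra datum $C^2=C\cdot C$ that you include (and your remark distinguishing the set of possible slopes from the slopes actually attained, which depend on $D$ via the order of appearance) is a correct refinement rather than a departure in method.
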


\begin{proof}
	Recall from Section \ref{sec:lm-klm} that in the interval $[t_{i-1},t_i]$, $a_j(t)$ can be written as $a_j(t)=a_{j0}+a_{j1}(t)$ 
	satisfying equations \eqref{eq:linear-equations}.
	By looking at the coefficients of $t$ in \eqref{eq:linear-equations}, we see that
\begin{equation}
\begin{array}{r@{}lc}
a_{11}C_1\cdot C_j+\dots+a_{i-1,1}C_{i-1}\cdot C_j&{}=-C\cdot C_j, & 1\le j< i,\\
a_{j1}&{}=0, & i\le j\le n. 
\end{array}\end{equation}
	Thus the coefficients $a_{j1}$ are determined by the intersection numbers $C\cdot C_j$ and $C_i\cdot C_j$, and
	in the interval $[t_{i-1},t_i]$ we have
	$$
	\alpha(t)=(N_t\cdot C)_p=\sum_{j=1}^{i-1} (a_{j0}+a_{j1}t)(C_j\cdot C)_p, 
	$$
	i.e., the slope of the corresponding lower side of $\Delta_{\Y}(D)$ is
	$\sum a_{j1} (C_j\cdot C)_p$, which is determined by the intersection numbers $C\cdot C_j$ and $C_i\cdot C_j$ and the local intersection numbers $(C\cdot C_j)_p$.
	
	On the other hand, 
	\begin{gather*}
	\beta(t)=(N_t\cdot C)_p+P_t \cdot C=\\
	\sum_{j=1}^{i-1} (a_{j0}+a_{j1}t)(C_j\cdot C)_p+D\cdot C-\sum_{j=1}^{i-1} (a_{j0}+a_{j1}t)C_j\cdot C, 
	\end{gather*}
	i.e., the slope of the corresponding upper side of $\Delta_{\Y}(D)$ is
	$$\sum a_{j1} \left((C_j\cdot C)_p-C_j\cdot C\right),$$
	 which is determined by the intersection numbers $C\cdot C_j$ and $C_i\cdot C_j$ and the local intersection numbers $(C\cdot C_j)_p$.
\end{proof}

For a fixed flag $\Y:S\supset C \supset \{p\}$ and a fixed negative definite configuration $C_1,\dots,C_n$, let $\mathcal{D}_C(C_1,\dots, C_n)$ stand for the set of big divisors $D$ such that the negative components of $D-\mu C$ are exactly the $C_i$ numbered by order of appearance in $D-tC$.
Proposition \ref{pro:slopes} shows that the bodies $\Delta_{\Y}(D)$ for $D\in\mathcal{D}_C(C_1,\dots, C_n)$ share the overall shape (number of vertices and slopes of sides), differing only in the lengths of their sides. These lengths are determined by intersection numbers as follows.

\begin{proposition}\label{pro:lengths}
	For fixed $\Y:S\supset C \supset \{p\}$, and $C_1,\dots,C_n$,
	for every $D \in \mathcal{D}_C(C_1,\dots, C_n)$, the lengths of the sides of $\Delta_{\Y}(D)$ are determined by $\mu$  and the intersection numbers $D\cdot C_j$, and $D\cdot C$.	
\end{proposition}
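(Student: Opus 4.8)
The plan is to prove the stronger statement that the entire polygon $\Delta_{\Y}(D)$ is reconstructible from $\mu$, the numbers $D\cdot C_j$, and $D\cdot C$. Recall from Section \ref{sec:lm-klm} that $\Delta_{\Y}(D)=\{(t,s):\nu\le t\le\mu,\ \alpha(t)\le s\le\beta(t)\}$, that $\alpha$ and $\beta$ are piecewise linear with breakpoints among $\nu=t_0\le t_1\le\dots\le t_{n+1}=\mu$, and --- by Proposition \ref{pro:slopes} --- that the slope of every side is determined by the intersection numbers $C\cdot C_j$, $C_i\cdot C_j$ and $(C\cdot C_j)_p$. Since $\Y$ and $C_1,\dots,C_n$ are fixed, these numbers are fixed, so the slopes do not depend on the admissible divisor $D$. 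Consequently the length of a non-vertical side lying over an interval $[t_{i-1},t_i]$ equals $|t_i-t_{i-1}|$ times a constant depending only on the (fixed) slope, while the length of each of the at most two vertical sides equals $\beta-\alpha$ evaluated at $t=\nu$ or $t=\mu$. Thus I would reduce the proposition to showing that $\nu$, the $t_i$, and the two boundary values $\alpha(\mu),\beta(\mu)$ are all determined by the allowed data: from these, together with the fixed slopes, every vertex is recovered by propagating along the upper and lower boundaries starting from $t=\mu$.

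For the right endpoint the computation is immediate. Since $P_\mu\cdot C_j=0$ for every $j$, the coefficients of $N_\mu=\sum_j a_j(\mu)C_j$ satisfy $\sum_k a_k(\mu)(C_k\cdot C_j)=D\cdot C_j-\mu(C\cdot C_j)$ for $j=1,\dots,n$; as $(C_k\cdot C_j)$ is negative definite, hence invertible, the $a_j(\mu)$ are determined by $\mu$, the $D\cdot C_j$ and the (fixed) intersection matrix. Then $\alpha(\mu)=(N_\mu\cdot C)_p=\sum_j a_j(\mu)(C_j\cdot C)_p$ and $\beta(\mu)=\alpha(\mu)+P_\mu\cdot C=\alpha(\mu)+D\cdot C-\mu C^2-\sum_j a_j(\mu)(C_j\cdot C)$ are likewise determined by the allowed data (note $C^2$ is fixed).

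For a breakpoint $t_i$ with $\nu<t_i<\mu$, let $t'$ be the smallest of the $t_j$ that exceed $t_i$. On $[t_i,t']$ the curve $C_i$ lies in the support of $N_t$ and $a_i(t)=a_{i0}+a_{i1}t$; by the normalization recorded just before Lemma \ref{intersect-slope} one has $a_{i1}=a_{i1}^+>0$, and $a_i(t_i)=0$ by continuity (since $C_i$ is not a component of $N_{t_i}$ when $t_i>\nu$). Hence $t_i=-a_{i0}/a_{i1}$. Splitting the equations \eqref{eq:linear-equations} on $[t_i,t']$ into their constant and linear parts, exactly as in the proof of Proposition \ref{pro:slopes}, shows that $a_{i1}$ solves a linear system with coefficients among the $C\cdot C_j$ and $C_k\cdot C_j$, and that $a_{i0}$ solves the system $\sum_k a_{k0}(C_k\cdot C_j)=D\cdot C_j$ with $j$ ranging over the components present on the interval; since both matrices are invertible, $a_{i0}$, $a_{i1}$, and therefore $t_i$, are determined by the $D\cdot C_j$ and the fixed intersection numbers. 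Carrying this out for all $i$ yields every interior breakpoint (coincident ones come out equal, as they must), while $t_0=\nu$ and $t_{n+1}=\mu$ remain. Finally $\nu=\nu_C(D)$ equals $0$ unless $C$ occurs in $N_0$, and in that case, since the Zariski decomposition $D=P_0+N_0$ is governed (by the linear algebra recalled in the proof of Lemma \ref{relative-bauer}) by the intersection numbers of the curves in $N_0$ --- which are $C$ together with a subset of $\{C_1,\dots,C_n\}$ --- the coefficient $\nu$ of $C$ in $N_0$ is again determined by the $D\cdot C_j$, $D\cdot C$ and the fixed intersection numbers.

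The step I expect to be the main obstacle is the treatment of the breakpoints: it requires careful bookkeeping, across successive Zariski chambers, of which components are present on each interval (the support of $N_t$ only grows with $t$ and is pinned down once the order of appearance is fixed), together with degenerate intervals and the possibility that several components enter simultaneously; the case $\nu>0$ is a similar bookkeeping matter. It is also worth recording that the combinatorial type of $\Delta_{\Y}(D)$ --- in particular its number of sides --- is itself determined by the fixed configuration data (via Propositions \ref{inner} and \ref{rightmost} and Corollary \ref{one-rightmost}) together with $\mu$, so that the list of side lengths is meaningful uniformly over all admissible $D$.
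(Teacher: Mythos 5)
Your proposal is correct and takes essentially the same route as the paper: the slopes are fixed by Proposition \ref{pro:slopes}, and the remaining task is to show that $\nu$ and the breakpoints $t_i$ are determined by solving the linear systems \eqref{eq:linear-equations} chamber by chamber, your characterization $t_i=-a_{i0}/a_{i1}$ (vanishing of the coefficient of the entering curve) being equivalent to the paper's left-to-right induction in which $t_i$ is the infimum of $t$ where the extended positive part meets some new $C_{j'}$, $j'\ge i$. The bookkeeping of which components are present on each interval (including simultaneous entries), which you flag as the main obstacle, is handled in the paper precisely by that induction and is treated there with no more detail than in your sketch.
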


Note that to actually compute the lengths of the sides of $\Delta_{\Y}(D)$, the intersection numbers appearing in Proposition \ref{pro:slopes} are needed too. The claim in \ref{pro:lengths} can be rephrased saying that the map 
\begin{align*}
\mathcal{D}_C(C_1,\dots, C_n) & \rightarrow \bbR^m \\
D &\mapsto (\text{lengths of sides})
\end{align*}
factors through
\begin{align*}
\mathcal{D}_C(C_1,\dots, C_n) & \rightarrow \bbR^{n+1} \\
D &\mapsto (D\cdot C_1,\dots,D\cdot C_n,D\cdot C).
\end{align*}

\begin{proof}
	The length of the leftmost vertical side is $P_\nu\cdot C=P_0\cdot C$; since the coefficients of $N_0=\nu C+a_1 C_1+ \dots +a_k C_k$ (where $k$ is the maximum index with $t_k=\nu$) are determined by:
\begin{align}\label{eq:N0}
	(D-N_0)\cdot C&\ge 0, \text{with equality unless } \nu=0,\\ \label{eq:N0i}
	(D-N_0)\cdot C_i&=0, \quad i=1,\dots, k, 
\end{align}	
	it is clear that $\nu$ is determined by the claimed intersection numbers.
	Note that $k$ is also determined by the intersection numbers, as the minimum $k$ such that there is a solution of the form $N_0=\nu C+a_1 C_1+ \dots +a_k C_k$ to \eqref{eq:N0}, \eqref{eq:N0i} with $(D-N_0)\cdot C_j\ge 0$ for all $j>k$.
	
	After Proposition \ref{pro:slopes}, the slopes of all sides are determined by the fixed data, thus it is enough to prove that the values $t_i$ are determined by the intersection numbers $D\cdot C_j$, and $D\cdot C$.	
	Let us prove this by induction on $i$. 
	For $i\le k$, $C_i$ belongs to $N_\nu$, so $t_i=\nu$, and we already showed that $k$ is determined by the intersection numbers. 
	So assume $t_i>t_{i-1}\ge \nu$ and $t_1,\dots, t_{i-1}$ are determined by $D\cdot C_j$, and $D\cdot C$.
	Then for $t_{i-1}<t\ll t_{i-1}+1$, 
	$$
	N_{t}=\sum_{j=1}^{i-1} (a_{j0}+a_{j1}t) C_j, 
	$$
	and $t_i$ is the infimum of the $t$ such that 	
	$$
	D-tC-\sum_{j=1}^{i-1} (a_{j0}+a_{j1}t) C_j
	$$
	intersect some $C_{j'}$, $j'\ge i$ negatively.
\end{proof}

\begin{remark}
	Note that the methods of section \ref{sec:proofs} provide information on the set $\mathcal{D}_C(C_1,\dots, C_n)$. For example, if $C_1+\dots+C_n$ is a maximal negative definite configuration and all the $t_i$ are distinct, then this set is an open subset of the big cone.
\end{remark}

{\footnotesize
	\bibliographystyle{plainurl}
\bibliography{NOB}}

	Joaquim Ro\'{e},
	Universitat Aut\`{o}noma de Barcelona, Departament de Matem\`{a}tiques,
08193 Bellaterra (Barcelona), Spain. \\
\nopagebreak
\textit{E-mail address:} \texttt{jroe@mat.uab.cat}

   Tomasz Szemberg, 
   Department of Mathematics, Pedagogical University Cracow,
   Podchor\c{a}\.zych 2, PL-30-084 Krak\'ow, Poland.\\
\nopagebreak
\textit{E-mail address:} \texttt{tomasz.szemberg@gmail.com}

\end{document}